\titleformat*{\section}{\scshape\Large\bfseries}
\titleformat*{\subsection}{\scshape\large\bfseries}
\newcommand\red{\textcolor{red}}
\numberwithin{equation}{section}
\renewcommand{\@secnumfont}{\bfseries}
\renewcommand\section{\@startsection{section}{1}%
\z@{.7\linespacing\@plus\linespacing}{.5\linespacing}%
{\large\scshape\bfseries\centering}}
\renewcommand\subsection{\@startsection{subsection}{2}%
  \z@{.5\linespacing\@plus.7\linespacing}{-.5em}%
  {\bfseries\scshape}}
\newenvironment{myitemize}{%
\begin{list}{$\bullet$}%
 	{%
	\setlength{\itemsep}{0.4em}%
	\setlength{\topsep}{0.5em}%
	\setlength\leftmargin{2.45em}%
	\setlength\labelwidth{2.05em}%
	\setlength{\labelsep}{0.4em}%
	}%
	}%
{\end{list}}
\newcommand*{\myfnsymbolsingle}[1]{%
  \ensuremath{%
    \ifcase#1
    \or 
      \dagger
    \else 
      \@ctrerr  
    \fi
  }%
}   
\newalphalph{\myfnsymbolmult}[mult]{\myfnsymbolsingle}{}
\theoremstyle{plain}
\newtheorem{theorem}{Theorem}[section]
\newtheorem{proposition}[theorem]{Proposition}
\newtheorem{corollary}[theorem]{Corollary}
\theoremstyle{definition}
\DeclarePairedDelimiter\abs{\lvert}{\rvert} 
\let\oldabs\abs
\def\abs{\@ifstar{\oldabs}{\oldabs*}}
\DeclarePairedDelimiterX{\norm}[1]{\lVert}{\rVert}{#1} 
\let\oldnorm\norm
\def\norm{\@ifstar{\oldnorm}{\oldnorm*}}
\DeclarePairedDelimiterX{\ceil}[1]{\lceil}{\rceil}{#1} 
\let\oldceil\ceil
\def\ceil{\@ifstar{\oldceil}{\oldceil*}}
\DeclarePairedDelimiterX{\floor}[1]{\lfloor}{\rfloor}{#1} 
\let\oldfloor\floor
\def\floor{\@ifstar{\oldfloor}{\oldfloor*}}
\newcommand{\ga}{\alpha}
\newcommand{\gb}{\beta}
\newcommand{\diff}{\mathop{}\!\mathrm{d}}
\renewcommand{\P}{\mathbb{P}} 
\newcommand{\1}{\mathbbm{1}} 
\renewcommand{\i}{\mathrm{i}} 
\DeclareMathOperator{\e}{\mathrm{e}} 
\DeclareMathOperator{\GL}{GL} 
\DeclareMathOperator{\SO}{SO} 
\DeclareMathOperator{\schur}{s} 
\renewcommand{\sp}{\operatorname{sp}} 
\DeclareMathOperator{\Ai}{Ai} 
\newcommand{\N}{\mathbb{N}} 
\newcommand{\R}{\mathbb{R}} 
\newcommand{\C}{\mathbb{C}} 
\renewcommand{\epsilon}{\varepsilon}
\renewcommand{\rho}{\varrho}
\renewcommand{\phi}{\varphi}
\DeclareMathSymbol{\widehatsym}{\mathord}{largesymbols}{"62}
\renewcommand{\tilde}{\widetilde} 
\newcommand{\doubletilde}[1]{{%
  \mathpalette\double@tilde{#1}%
}}
\newcommand{\double@tilde}[2]{%
  \sbox\z@{$\m@th#1\tilde{#2}$}%
  \ht\z@=.9\ht\z@
  \tilde{\box\z@}%
}
\newenvironment{myenumerate}{%
\renewcommand{\theenumi}{(\roman{enumi})}%
\renewcommand{\labelenumi}{\theenumi}%
\begin{list}{\labelenumi}
	{%
	\setlength{\itemsep}{0.4em}%
	\setlength{\topsep}{0.5em}%
	\setlength\leftmargin{2.45em}%
	\setlength\labelwidth{2.05em}%
	\setlength{\labelsep}{0.4em}%
	\usecounter{enumi}%
	}%
	}%
{\end{list}
}
\renewenvironment{enumerate}{
\begin{myenumerate}}%
{\end{myenumerate}}
\lbrace\begin{array}{@{}l@{}}}%
\newsavebox{\mybox}\newsavebox{\mysim}
\newcommand{\distras}[1]{%
  \savebox{\mybox}{\hbox{\kern3pt$\scriptstyle#1$\kern3pt}}%
  \savebox{\mysim}{\hbox{$\sim$}}%
  \mathbin{\overset{#1}{\kern\z@\resizebox{\wd\mybox}{\ht\mysim}{$\sim$}}}%
}
\DeclareMathOperator{\Flat}{flat}
\DeclareMathOperator{\hFlat}{h-flat}
\newcommand{\fPi}{\Pi^{\Flat}} 
\newcommand{\hPi}{\Pi^{\hFlat}} 
\newcommand{\fTau}{\tau^{\Flat}} 
\newcommand{\hTau}{\tau^{\hFlat}} 
\newcommand{\fK}{K^{\Flat}} 
\newcommand{\hK}{K^{\hFlat}} 
\newcommand{\fKone}{K^{\Flat ,1}}
\newcommand{\fKtwo}{K^{\Flat ,2}}
\newcommand{\fKthree}{K^{\Flat ,3}}
\newcommand{\fKfour}{K^{\Flat ,4}}
\newcommand{\fKonetilde}{\tilde{K}^{\Flat ,1}}
\newcommand{\fKtwotilde}{\tilde{K}^{\Flat ,2}}
\newcommand{\fKthreetilde}{\tilde{K}^{\Flat ,3}}
\newcommand{\fKfourtilde}{\tilde{K}^{\Flat ,4}}
\newcommand{\hKone}{K^{\hFlat ,1}}
\newcommand{\hKtwo}{K^{\hFlat ,2}}
\newcommand{\hKthree}{K^{\hFlat ,3}}
\newcommand{\hKonetilde}{\tilde{K}^{\hFlat ,1}}
\newcommand{\hKtwotilde}{\tilde{K}^{\hFlat ,2}}
\newcommand{\hKthreetilde}{\tilde{K}^{\hFlat ,3}}
\newcommand{\fKtilde}{\tilde{K}^{\Flat}} 
\newcommand{\hKtilde}{\tilde{K}^{\hFlat}} 
\newcommand{\fHbar}{\widehat{H}^{\Flat}} 
\newcommand{\hHbar}{\widehat{H}^{\hFlat}} 
\newcommand{\fH}{H^{\Flat}} 
\newcommand{\hH}{H^{\hFlat}} 
\begin{document}

\title[GOE, ${\rm Airy}_{ 2\to 1}$ via symplectic Schur functions]{GOE and ${\rm Airy}_{2\to 1}$ marginal distribution \\ via symplectic Schur functions}

\author[E.~Bisi]{Elia Bisi}
\address{Department of Statistics\\
University of Warwick\\
Coventry CV4 7AL, UK}
\email{E.Bisi@warwick.ac.uk}

\author[N.~Zygouras]{Nikos Zygouras}
\address{Department of Statistics\\
University of Warwick\\
Coventry CV4 7AL, UK}
\email{N.Zygouras@warwick.ac.uk}

\keywords{Point-to-line directed last passage percolation, Tracy-Widom GOE distribution, ${\rm Airy_{2\to 1}}$, Schur functions, 
symplectic Schur functions}
\subjclass[2010]{Primary: 60Cxx, 05E05, 82B23; Secondary: 11Fxx, 82D60}

\maketitle

\begin{center}
{\it To Raghu Varadhan with great respect on the occasion of his $75th$ birthday}
\end{center}
\vskip 4mm

\begin{abstract}
We derive Sasamoto's Fredholm determinant formula for the Tracy-Widom GOE distribution, as well as the one-point marginal distribution of the ${\rm Airy}_{2\to1}$ process, originally derived by Borodin-Ferrari-Sasamoto, as scaling limits of point-to-line and point-to-half-line directed last passage percolation with exponentially distributed waiting times.
The asymptotic analysis goes through new expressions for the last passage times in terms of integrals of (the continuous analog of) symplectic and classical Schur functions, obtained recently in~\cite{BZ19a}.
\end{abstract}

\tableofcontents

\addtocounter{section}{0}

\section{Introduction}
\label{sec:model}

The goal of this contribution is to provide a new route to the Tracy-Widom GOE distribution and to the one-point marginal of the ${\rm Airy}_{2\to 1}$ process through asymptotics of directed last passage percolation (dLPP), starting from new exact formulae involving not only the usual Schur functions (that have been appearing so far in treatments of dLPP) but also symplectic Schur functions.
The first systematic study on the dLPP model goes back to Johansson~\cite{J00}, who derived the Tracy-Widom GUE limiting distribution for the model with point-to-point path geometry and geometrically distributed waiting times. For an overview on dLPP and other similar integrable probabilistic models related to random matrix theory and determinantal structures, the reader is referred to~\cite[ch.~10]{F10} and~\cite{K05}.
\vskip 1mm
Let us start by introducing the details of the directed last passage percolation model that we will work with.
 On the $(1+1)$-dimensional lattice we consider paths with fixed starting point and with ending point  lying free on a line or half-line. 
 We also consider a random field on the lattice, which is a collection $\{W_{i,j}\colon (i,j)\in \N^2\}$ of independent random variables, usually called \emph{weights} or \emph{waiting times} and which we will assume to be exponentially distributed. We will consider two different path geometries (see Fig.~\ref{fig:directedPath} for a graphical representation):
\begin{enumerate}
\item
The \emph{point-to-line} directed last passage percolation, defined by
\begin{equation}
\label{eq:fLPP}
\fTau_N := \max_{\pi\in\fPi_N} \sum_{(i,j)\in \pi} W_{i,j} \, ,
\end{equation}
where $\fPi_N$ is the set of directed (down-right) lattice paths of length $N$ (namely, made up of $N$ vertices) starting from $(1,1)$ and ending at any point of the line $\{(i,j)\in\N^2 \colon i+j = N+1\}$.
\item
The \emph{point-to-half-line} directed last passage percolation, defined by
\begin{equation}
\label{eq:hLPP}
\hTau_N := \max_{\pi\in\hPi_N} \sum_{(i,j)\in \pi} W_{i,j} \, ,
\end{equation}
where $\hPi_N$ is the set of directed lattice paths of length $N$ starting from $(1,1)$ and ending on the half-line $\{(i,j)\in\N^2 \colon i+j=N+1\, , \,\, i\leq j\}$.
\end{enumerate}

\begin{figure}
\centering
\begin{minipage}[b]{.5\linewidth}
\centering
\begin{tikzpicture}[scale=0.5]
\draw[thick,dashed] (10.5,-.5) -- (.5,-10.5);
\foreach \i in {1,...,10}{
	\draw[thick] (1,-\i) grid (11-\i,-\i);
	\draw[thick] (\i,-1) grid (\i,-11+\i);
	
		\foreach \j in {\i,...,10}{
		\node[draw,circle,inner sep=1pt,fill] at (11-\j,-\i) {};
	}
}

\draw[thick,color=red,-] (1,-1) -- (2,-1) -- (2,-2) -- (3,-2) -- (3,-3) -- (3,-4) -- (3,-5) -- (4,-5) -- (5,-5) -- (5,-6);
\foreach \x in {(1,-1),(2,-1),(2,-2),(3,-2),(3,-3),(3,-4),(3,-5),(4,-5),(5,-5),(5,-6)}{
	\node[draw,circle,inner sep=1pt,fill,red] at \x {};
	}
\end{tikzpicture}
\subcaption{Point-to-line path}
\label{subfig:P2Lpath}
\end{minipage}%
\begin{minipage}[b]{.5\linewidth}
\centering
\begin{tikzpicture}[scale=0.5]
\draw[thick,dashed] (10.5,-.5) -- (5.5,-5.5); \draw[thick,dashed] (1,-1) -- (6,-6);
\foreach \i in {1,...,5}{
	\draw[thick] (1,-\i) grid (11-\i,-\i);
	
		\foreach \j in {\i,...,10}{
		\node[draw,circle,inner sep=1pt,fill] at (11-\j,-\i) {};
	}
}

\foreach \j in {1,...,5}{
	\draw[thick] (\j,-1) grid (\j,-5);
	\draw[thick] (5+\j,-1) grid (5+\j,-6+\j);
}

\draw[thick,color=red,-] (1,-1) -- (1,-2) -- (1,-3) -- (2,-3) -- (3,-3) -- (4,-3) -- (5,-3) -- (6,-3) -- (6,-4) -- (7,-4);
\foreach \x in {(1,-1),(1,-2),(1,-3),(2,-3),(3,-3),(4,-3),(5,-3),(6,-3),(6,-4),(7,-4)}{
	\node[draw,circle,inner sep=1pt,fill,red] at \x {};
	}
\end{tikzpicture}
\subcaption{Point-to-half-line path}
\label{subfig:P2half-Lpath}
\end{minipage}%
\caption{Directed paths in $\N^2$ of length $10$ from the point $(1,1)$ to the line $i+j-1=10$. The paths, highlighted in red, correspond to the two geometries specified. The picture is rotated by $90^{\circ}$ clockwise w.r.t.\ the Cartesian coordinate system, to adapt it to the usual matrix/array indexing.}
\label{fig:directedPath}
\end{figure}
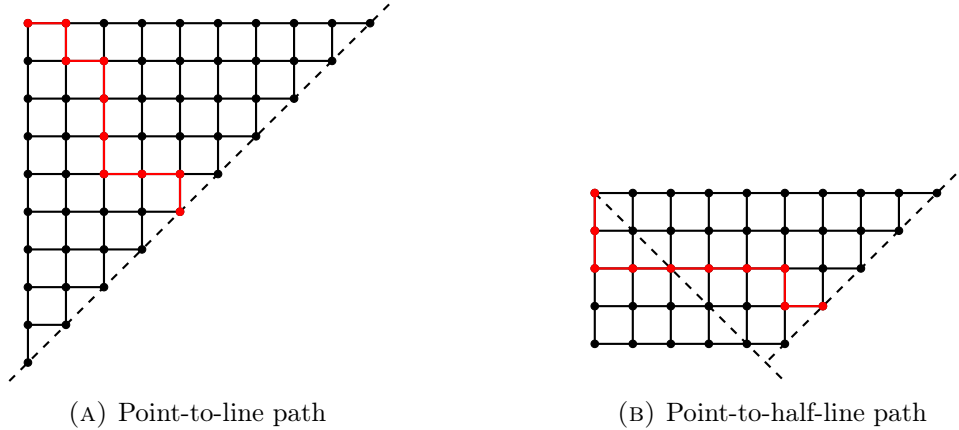

As already mentioned, in our treatment we will assume the weights $W_{i,j}$'s to be exponentially distributed. More specifically, for the point-to-line dLPP $\fTau_{2N}$, we consider a triangular array of independent weights $\bm{W}=\{W_{i,j}\colon (i,j)\in\N^2 \, , \,\, i+j\leq 2N+1 \}$ distributed as
\begin{equation}
\label{eq:expDistribution}
W_{i,j} \sim
\begin{cases}
{\rm Exp}(\alpha_i + \beta_j) &1\leq i,j\leq N \, , \\
{\rm Exp}(\alpha_i + \alpha_{2N-j+1}) &1\leq i\leq N \, , \,\, N < j\leq 2N-i+1 \, , \\
{\rm Exp}(\beta_{2N-i+1} + \beta_j) &1\leq j\leq N \, , \,\, N < i \leq 2N-j+1 \, ,
\end{cases}
\end{equation}
for fixed parameters $\alpha_1,\dots,\alpha_{N},\beta_1,\dots,\beta_{N}\in\R_+$.
Similarly, for the point-to-half-line dLPP $\hTau_{2N}$, we consider a trapezoidal array of independent weights $\bm{W}=\{W_{i,j}\colon (i,j)\in\N^2 \, , \,\, i+j\leq 2N+1 \, , \,\, i\leq N \}$ distributed according to~\eqref{eq:expDistribution} for $i\leq N$.
\vskip 1mm
Let us now present our results starting with the flat case. We will prove that $\fTau_{2N}$ has fluctuations of order $N^{1/3}$, and its scaling limit is given by the Tracy-Widom GOE distribution, whose cumulative distribution function $F_1$ can be expressed (among other expressions) via a Fredholm determinant \cite{Sa05} as 
\begin{equation}
\label{eq:GOE}
F_1(s)
= \det(I - \mathcal{K}_1)_{L^2([s,\infty))}
\end{equation}
for $s\in\R$, where $\mathcal{K}_1$ is the operator on $L^2([s,\infty))$ defined through the kernel
\begin{align}
\label{GOEkernel}
\mathcal{K}_1(\lambda,\xi) := \frac{1}{2}\Ai\bigg(\frac{\lambda + \xi}{2}\bigg) \, .
\end{align}
\begin{theorem}
\label{thm:fAsympt}
If the waiting times are independent and exponentially distributed with rate $2\gamma$, the limiting distribution of the point-to-line directed last passage percolation $\fTau_{2N}$ is given, for $r\in\R$, by
\begin{equation}
\label{eq:fAsympt}
\lim_{N\to\infty}\P\bigg(\fTau_{2N} \leq \frac{2N}{\gamma} + rN^{1/3} \bigg) 
= F_1\big(2^{1/3}\gamma r\big) \, .
\end{equation}
\end{theorem}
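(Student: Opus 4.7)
The plan is to start from the exact identity for $\fTau_{2N}$ derived in~\cite{BZ19a}, which expresses $\P(\fTau_{2N} \leq t)$ as an integral of continuous symplectic Schur functions against a suitable density coming from the exponential weights~\eqref{eq:expDistribution}. Specialising all parameters to $\alpha_i = \beta_i = \gamma$ and invoking the Cauchy--Binet type identity for the integral representation of $\sp$-Schur functions, the goal of the first step is to massage this identity into a Fredholm determinant (or pfaffian) on $L^2([t,\infty))$ with an explicit double-contour integral kernel $\fK_N(\lambda,\xi)$. This is the analogue for the flat geometry of Johansson's passage from Schur measures to a determinantal point process in the point-to-point/GUE case, but with the new feature that $\fK_N$ naturally exhibits the symmetry $\lambda \leftrightarrow \xi$ of the GOE kernel~\eqref{GOEkernel}.

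Second, I would perform the KPZ scaling $t = 2N/\gamma + rN^{1/3}$ and the matching change of variables $\lambda = 2^{1/3}\gamma\, u$, $\xi = 2^{1/3}\gamma\, v$ in $L^2$, so that the rescaled kernel
\begin{equation*}
\widetilde{\fK}_N(u,v) := 2^{1/3}\gamma\, N^{1/3}\, \fK_N\!\left(\frac{2N}{\gamma} + 2^{1/3}\gamma\, u\, N^{1/3}, \frac{2N}{\gamma} + 2^{1/3}\gamma\, v\, N^{1/3}\right)
\end{equation*}
should converge, as $N\to\infty$, to $\tfrac12 \Ai\!\left(\tfrac{u+v}{2} + \tfrac{r}{2}\right)$ on $L^2([0,\infty))^2$ (which is the shifted GOE kernel whose Fredholm determinant equals $F_1(2^{1/3}\gamma r)$). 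The core of the argument is the steepest-descent analysis of each spectral contour integral in $\fK_N$: the exponential action will have a unique double critical point at a value $w_c$ determined by $\gamma$, around which a cubic Taylor expansion produces the Airy integrand after rescaling $w = w_c + N^{-1/3}\, z$.

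The main obstacle, as is usually the case for GOE-type asymptotics, lies not in the pointwise saddle-point limit but in promoting it to trace-class convergence of the Fredholm expansion~\eqref{eq:GOE}. This requires (a) deforming both contours so that they pass through the saddle in the direction of steepest descent while remaining admissible for the original integral (no poles crossed), and (b) producing a uniform-in-$N$ exponential bound of the form $|\widetilde{\fK}_N(u,v)| \leq C \e^{-c(u+v)}$ on the tails, so that one can invoke dominated convergence on the series expansion of the Fredholm determinant. The symplectic-Schur origin of $\fK_N$ produces extra factors (typically of the form $\prod_i (1+w_i w_j)$ or reflection terms at $w\mapsto w^{-1}$) that must be shown not to spoil these bounds; controlling them, and verifying that the contour deformation can be carried out without picking up residues, is the technically delicate step.

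Once these two ingredients are in place, combining the pointwise convergence of the kernel with the uniform tail bound yields
\begin{equation*}
\lim_{N\to\infty}\P\!\left(\fTau_{2N} \leq \frac{2N}{\gamma} + rN^{1/3}\right)
= \det\!\left(I - \mathcal{K}_1\right)_{L^2([2^{1/3}\gamma r,\infty))}
= F_1(2^{1/3}\gamma r),
\end{equation*}
as claimed in~\eqref{eq:fAsympt}.
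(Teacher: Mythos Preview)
Your overall strategy---Fredholm determinant representation, steepest descent around a cubic critical point, uniform exponential tail bounds, dominated convergence on the Fredholm series---is exactly the scaffolding the paper uses. However, you are missing the one structural step that makes the flat case work cleanly, and without it your ``steepest-descent of each spectral contour integral'' would run into trouble. In the paper the kernel $\fK_{N,u}$ is a double contour integral against a function $\fHbar_u(z,w)$ that splits into \emph{three} additive pieces (coming from integrating the symplectic-type factor $(\e^{zx}-\e^{-zx})(\e^{wx}-\e^{-wx})$), each carrying a simple pole at $w=\pm z$. The decisive move is to swap the two contours in exactly one of these pieces and extract the residue at $w=z$: this residue is a \emph{single} contour integral of the form $-\tfrac{1}{2\pi\i}\int \e^{-(2u+\lambda+\xi)z}\big(\tfrac{\gamma+z}{\gamma-z}\big)^{2N}\diff z$, and it is this term alone that converges (after the $N^{1/3}$ rescaling) to the one-variable Airy function $\Ai$ giving Sasamoto's kernel $\tfrac12\Ai\big(\tfrac{\lambda+\xi}{2}\big)$. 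The remaining three double-contour pieces are shown to vanish by crude $\sup$-bounds on fixed contours, not by any saddle-point analysis. Your sketch treats the whole kernel as a single object with a ``double critical point'', which would force you to handle the $1/(z\pm w)$ singularities near the saddle simultaneously; that is both harder and not what actually happens.

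Two smaller points. First, in the paper the Fredholm determinant lives on $L^2(\R_+)$ with the threshold $u$ entering as a \emph{parameter} of the kernel $\fK_{N,u}(\lambda,\xi)$; your rescaling formula instead plugs $2N/\gamma + \cdots$ into the kernel \emph{arguments}, which conflates the threshold with the spectral variables. Second, your target limit $\tfrac12\Ai\big(\tfrac{u+v}{2}+\tfrac{r}{2}\big)$ has the wrong shift: to recover $F_1(2^{1/3}\gamma r)$ from a kernel on $L^2([0,\infty))$ you need the shift $+2^{1/3}\gamma r$, not $+r/2$. Finally, the ``or pfaffian'' hedge and the speculation about factors like $\prod(1+w_iw_j)$ or $w\mapsto w^{-1}$ are off the mark here: one of the points of this approach is precisely that it lands on Sasamoto's \emph{determinantal} formula, and the symplectic structure shows up only through the explicit three-term form of $\fHbar_u$.
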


Regarding the point-to-half-line dLPP, we will prove that, for exponentially distributed i.i.d.\  environment, $\hTau_{2N}$ has fluctuations of order $N^{1/3}$, and its scaling limit is given by the one-point distribution $F_{2\to 1}$ of the $\rm{Airy}_{2\to 1}$ process. The expression we will arrive at is the following~\cite{BFS08}:
\begin{equation}
\label{eq:Airy21}
F_{2 \to 1}(s)
= \det(I - \mathcal{K}_{2\to 1})_{L^2([s,\infty))} \, ,
\end{equation}
where $\mathcal{K}_{2\to 1}$ is the operator on $L^2([s,\infty))$ defined through the kernel
\begin{equation}
\label{eq:Airy21kernel}
\mathcal{K}_{2\to 1}(\lambda,\xi)
:= \int_0^{\infty} \Ai(\lambda+x) \Ai(\xi+x) \diff x
+ \int_0^{\infty} \Ai(\lambda+x) \Ai(\xi-x) \diff x
\, .
\end{equation}
\begin{theorem}
\label{thm:hAsympt}
If the waiting times are independent and exponentially distributed with rate $2\gamma$, the limiting distribution of the point-to-half-line directed last passage percolation $\hTau_{2N}$ is given, for $r\in\R$, by
\[
\lim_{N\to\infty}\P\bigg(\hTau_{2N} \leq \frac{2N}{\gamma} + rN^{1/3} \bigg) 
= F_{2\to 1}\big(2^{-1/3} \gamma r \big) \, .
\]
\end{theorem}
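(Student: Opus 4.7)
The plan is to parallel the treatment of Theorem~\ref{thm:fAsympt}, starting from the integral representation for the point-to-half-line passage time derived in~[BZ19a], which expresses $\P(\hTau_{2N}\leq u)$ as an integral of a product of \emph{one} continuous symplectic Schur function (accounting for the reflecting boundary effectively imposed by the part of the trapezoidal array with $i+j>N+1$) and \emph{one} continuous classical Schur function (accounting for the free endpoint along the half-line $i\leq j$). This is in contrast with the flat case, whose integral representation carries a \emph{squared} symplectic factor. After specializing all weights to the common rate $2\gamma$, I would apply an Andréief/Cauchy--Binet-type manipulation to rewrite the distribution function as a Fredholm determinant on $L^2$ of a suitable contour, with an explicit correlation kernel $\hK_N(\lambda,\xi)$ built out of double-contour integrals.

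Next I would perform a steepest-descent analysis of $\hK_N$ under the scaling $\lambda = 2N/\gamma + \tilde\lambda N^{1/3}$ and $\xi = 2N/\gamma + \tilde\xi N^{1/3}$. The key structural feature is that the phase functions governing the two contour variables are asymmetric: one carries the classical Schur phase, the other the symplectic one. Both variables concentrate near a common critical point $z_c = \gamma$, and after the usual cubic rescaling around $z_c$ each double-contour integral limits to an integral of Airy functions. The symplectic factor enjoys a $z\leftrightarrow -z$ reflection symmetry, which in the limit produces the second summand $\int_0^\infty \Ai(\tilde\lambda+x)\Ai(\tilde\xi-x)\diff x$ of the kernel~\eqref{eq:Airy21kernel}, while the classical Schur factor contributes the standard Airy-kernel piece $\int_0^\infty \Ai(\tilde\lambda+x)\Ai(\tilde\xi+x)\diff x$. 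The numerical prefactor $2^{-1/3}\gamma$ in the argument of $F_{2\to 1}$ emerges from matching the local rescaling near $z_c$ with the standard Airy normalization; compared with the factor $2^{1/3}\gamma$ of Theorem~\ref{thm:fAsympt}, it reflects the presence of only a single symplectic factor rather than two in the underlying Schur-type representation.

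The main obstacle will be upgrading the pointwise steepest-descent limit of $\hK_N$ to trace-norm convergence of Fredholm determinants on $L^2([s,\infty))$, which in turn requires uniform-in-$N$ tail estimates on the kernel. Because $\hK_N$ is a sum of two double-contour integrals of different origin (classical and symplectic), one must in general deform them along distinct steepest-descent contours and verify cubic Gaussian-type decay of the form $\lvert e^{-c|z|^3}\rvert$ along each piece, together with the usual off-diagonal decay in $\tilde\lambda,\tilde\xi$. A secondary technical difficulty is justifying the passage from the original integration contours dictated by~[BZ19a] to the steepest-descent ones without crossing poles; the poles introduced by the symplectic factor lie symmetrically about the origin and should not obstruct deformation, but they have to be tracked with the same care as in the flat case. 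Once these uniform bounds are in place, dominated convergence converts the pointwise kernel limit into the claimed Fredholm determinant $F_{2\to 1}(2^{-1/3}\gamma r)$.
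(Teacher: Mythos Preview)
Your overall architecture is right---start from the Schur integral of~[BZ19a], pass to a Fredholm determinant via Cauchy--Binet, do steepest descent on the kernel, and conclude by dominated convergence---but two of the concrete mechanisms you describe are off and would derail an actual write-up.

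First, the critical point is not $z_c=\gamma$. After specialization $\alpha_m=\beta_m=\gamma$, the kernel carries the factor $\big[\tfrac{(z+\gamma)(w+\gamma)}{(z-\gamma)(w-\gamma)}\big]^N$ together with $\e^{-u(z+w)}$ (or $\e^{-2uz}$), so the relevant phase is $F(z)=\log(\gamma+z)-\log(\gamma-z)-\tfrac{2}{\gamma}z$, identical for $z$ and $w$. Its unique double critical point is $z=0$ (with $F(0)=F'(0)=F''(0)=0$, $F'''(0)=4/\gamma^3$), while $z=\gamma$ is a \emph{pole}. The steepest-descent contours are the triangular paths through the origin of Proposition~\ref{prop:steepestDescent}, shifted by $\epsilon\gamma/\sqrt[3]{2N}$; attempting descent at $\gamma$ would not work.

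Second, the phases for $z$ and $w$ are not asymmetric; what distinguishes the half-flat case from the flat one is the function $\hHbar_u$ multiplying the symmetric phase. Computing $\hHbar_u:=C-\hH_u$ from $\hH_u(z,w)=\e^{-u(z+w)}\int_0^u(\e^{zx}-\e^{-zx})\e^{wx}\diff x$ gives three terms,
\[
\hHbar_u(z,w)=\frac{\e^{-u(z+w)}}{z+w}+\frac{\e^{-u(z+w)}}{z-w}+\frac{\e^{-2uz}}{w-z}\,.
\]
The first two survive the limit: after the cubic rescaling, one writes $\tfrac{1}{z+w}=\int_0^\infty \e^{-(z+w)x}\diff x$ and $\tfrac{1}{z-w}=\int_0^\infty \e^{-(z-w)x}\diff x$ (the latter requiring the $z$-contour to sit strictly to the right of the $w$-contour), and Fubini yields the two summands $\int_0^\infty\Ai(\lambda_r+x)\Ai(\xi_r+x)\diff x$ and $\int_0^\infty\Ai(\lambda_r+x)\Ai(\xi_r-x)\diff x$ of~\eqref{eq:Airy21kernel}. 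The third term coincides with $\fKtwo_{N,u}$ from the flat proof and vanishes by the same crude estimate. So the ``reflection'' piece does ultimately trace back to the $\e^{-zx}$ in the symplectic factor, but it enters through $\hHbar_u$, not through a separate symplectic phase for one of the contour variables.

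Two smaller points: the scaling is applied to $u$ (set $u_N=2N/\gamma+rN^{1/3}$) and then the $L^2(\R_+)$ variables $\lambda,\xi$ are rescaled by $\sqrt[3]{2N}/\gamma$, rather than shifting $\lambda,\xi$ by $2N/\gamma$; and the paper upgrades pointwise to Fredholm convergence via Hadamard's inequality and dominated convergence on the series~\eqref{eq:fredholmDet}, which is lighter than establishing trace-norm convergence.
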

\vskip 4mm
Expression~\eqref{eq:GOE}-\eqref{GOEkernel} for the GOE distribution is different from the one originally derived by Tracy and Widom, first expressed in terms of Painlev\'e functions~\cite{TW96} and then in terms of a block-Fredholm Pfaffian~\cite{TW98, TW05}. 
Sasamoto's original derivation of~\eqref{GOEkernel} came through the analysis of the Totally Asymmetric Simple Exclusion Process (TASEP), with an initial condition of the form $\cdots0101010000\cdots$, where $1$ denotes a particle and $0$ a hole. The presence of the semi infinite sequence of holes is technical and the actual focus of the asymptotic analysis in~\cite{Sa05} is on the alternating particle-hole regime, which simulates \emph{flat} initial conditions.
  The starting point for this derivation in \cite{Sa05} was Sch\"utz's determinantal formula~\cite{S97} for the occupation probabilites in TASEP, obtained via Bethe ansatz methods. A proof that Sasamoto's formula actually provides a different expression for the Tracy-Widom GOE distribution was provided in~\cite{FS05}. Subsequently to~\cite{Sa05}, the $F_{2\to 1}$ distribution was derived in~\cite{BFS08} by studying, again via Sch\"utz's and Sasamoto's formulae, the asymptotic distribution of TASEP particles with initial configuration $\cdots0101010000\cdots$, but now at the interface between the right half end $000\cdots$ and the left half alternating (flat) configuration $\cdots010101$. 
  \vskip 2mm
  Asymptotics recovering the Tracy-Widom GOE distribution as a limiting law have been performed in~\cite{BR01, F04} for directed last passage percolation and polynuclear growth models, and more recently in~\cite{LeDC12}, at a nonrigorous level,  for the KPZ equation with flat initial data. All these works derive Painlev\'e expressions or various forms of block-Fredholm Pfaffian formulae for the Tracy-Widom GOE. In contrast, our approach leads directly to Sasamoto's Fredholm determinant formula~\eqref{eq:GOE}-\eqref{GOEkernel}, as well as to the ${\rm Airy_{2\to1 }}$ formula~\eqref{eq:Airy21}-\eqref{eq:Airy21kernel}.
\vskip 2mm
Methodologically, both \cite{BR01, F04} use a symmetrization argument that amounts to considering point-to-point dLPP on a square array with waiting times symmetric along the antidiagonal. We do not use such an
argument but we rather start from the exact integral formulae obtained in~\cite{BZ19a} for the cumulative distribution function of~\eqref{eq:fLPP} and~\eqref{eq:hLPP} with the exponential distribution specified above, in terms of integrals of (the continuous analog of) both standard and symplectic Schur functions, cf.~\eqref{eq:fLPPint}, \eqref{eq:hLPPint}.
In representation theory, Schur functions appear as characters of irreducible representations of classical groups~\cite{FH91} (in our specific case, general linear groups and symplectic groups), and are defined as certain sums on Gelfand-Tsetlin patterns. Continuous Schur functions, which are of our interest here, are continuous limits of rescaled Schur functions, and by Riemann sum approximation turn out to be integrals on continuous Gelfand-Tsetlin patterns (see for example~\cite[Prop.~4.2]{BZ19a} for the symplectic case).
Due to the determinantal structure of Schur functions, which in turn arises from the Weyl character formula~\cite{FH91}, continuous Schur functions can also be shown to have a determinantal form. The continuous analogs of standard and symplectic Schur functions, denoted by $\schur^{\rm cont}_{\bm{\alpha}}(\bm{x})$ and $\sp^{\rm cont}_{\bm{\alpha}}(\bm{x})$ respectively for $\bm{\alpha},\bm{x}\in\R^N$, thus have the following representation:
\begin{align}
\label{eq:contS}
\schur^{\rm cont}_{\bm{\alpha}}(\bm{x})
&= \frac{\det(\e^{\alpha_j x_i})_{1\leq i,j\leq N}}
{\prod_{1\leq i<j\leq N} (\alpha_i - \alpha_j)} \, , \\
\label{eq:contSp}
\sp^{\rm cont}_{\bm{\ga}}(\bm{x})
&= \frac{\det\big(\e^{\alpha_j x_i} - \e^{-\alpha_j x_i}\big)_{1\leq i,j\leq N}}
{\prod_{1\leq i<j\leq N}(\alpha_i-\alpha_j) \prod_{1\leq i\leq j\leq N}(\alpha_i + \alpha_j)} \, .
\end{align}
Our formulae for the two dLPP models then read as follows.
\begin{proposition}[\cite{BZ19a}]
Ignoring the normalization constants that depend on the parameters $\alpha_i$'s and $\beta_j$'s only, the distribution functions of $\fTau_{2N}$ and $\hTau_{2N}$ are given by
\begin{align}
\label{eq:fLPPint}
\P(\fTau_{2N}\leq u)
&\propto \e^{-u \sum_{k=1}^N( \alpha_k+\beta_k)}
\int_{\{0\leq x_N\leq \cdots \leq x_1\leq u\}}  
\sp^{\rm cont}_{\bm{\ga}}(\bm{x})  \sp^{\rm cont}_{\bm{\gb}}(\bm{x}) \prod_{i=1}^N \diff x_i \, , \\
\label{eq:hLPPint}
\P(\hTau_{2N} \leq u)
&\propto \e^{-u \sum_{k=1}^N( \alpha_k+\beta_k)}
\int_{\{0\leq x_N\leq \cdots \leq x_1\leq u\}}  
\sp^{\rm cont}_{\bm{\alpha}}(\bm{x})  \schur^{\rm cont}_{\bm{\beta}}(\bm{x}) \prod_{i=1}^N \diff x_i \, .
\end{align}
\end{proposition}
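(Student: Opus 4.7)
The plan is to follow~\cite{BZ19a}, where these identities are derived combinatorially via a variant of the Robinson--Schensted--Knuth (RSK) correspondence, followed by a passage to the continuous limit. First, one discretises the environment: each exponential weight is approximated by a rescaled geometric variable, with $W_{i,j}\approx\varepsilon\, G_{i,j}^{(\varepsilon)}$ and $G_{i,j}^{(\varepsilon)}\sim\mathrm{Geom}\bigl(1-\e^{-\varepsilon(\alpha_i+\beta_j)}\bigr)$, so that the associated geometric point-to-line and point-to-half-line LPPs converge in distribution to $\fTau_{2N}$ and $\hTau_{2N}$ as $\varepsilon\downarrow 0$.

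Next, apply a symmetric (resp.\ half-symmetric) RSK correspondence to this geometric array. The triangular weight array for the point-to-line model reflects naturally to a symmetric $2N\times 2N$ matrix along its antidiagonal, and the Sagan--Stanley/Berele symmetric RSK encodes it as a single symplectic Gelfand--Tsetlin pattern whose top entry, by Greene's theorem, coincides with the geometric last passage time. For the point-to-half-line trapezoidal array only half of the entries are forced symmetric, and RSK produces a symplectic pattern paired with an ordinary one. Summing the resulting joint distribution over intermediate entries and invoking the symplectic Cauchy identity, one obtains discrete formulae of the form
\[
\P\bigl(\text{geometric point-to-line LPP}\leq u\bigr)\propto\sum_{\lambda:\,\lambda_1\leq u/\varepsilon}\sp_{\lambda}\bigl(\e^{-\varepsilon\bm\alpha}\bigr)\,\sp_{\lambda}\bigl(\e^{-\varepsilon\bm\beta}\bigr),
\]
with the analogous identity for the half-line case in which one $\sp_\lambda$ is replaced by the classical Schur function $\schur_\lambda$.

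Finally, one passes to the continuum $\varepsilon\downarrow 0$. Rescaling $\lambda\mapsto\bm x=\varepsilon\lambda$, the Weyl character formulae for $\sp_\lambda$ and $\schur_\lambda$ converge (after extracting appropriate powers of $\varepsilon$ from numerator and denominator) to the determinantal expressions~\eqref{eq:contS}--\eqref{eq:contSp}; the sum over partitions on a lattice of spacing $\varepsilon$ becomes a Riemann integral over the chamber $\{0\leq x_N\leq\cdots\leq x_1\leq u\}$; and the normalisation constants arising from the symplectic Cauchy identity, combined with the geometric-to-exponential rescaling, produce the stated prefactor $\e^{-u\sum_k(\alpha_k+\beta_k)}$.

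The main obstacle is the combinatorial step above: one must carefully verify that the symmetric RSK correspondence, when adapted to the non-square triangular and trapezoidal arrays and their inhomogeneous parameter structure $(\bm\alpha,\bm\beta)$, genuinely pushes the geometric weight distribution forward to the claimed symplectic (resp.\ mixed) Schur measure — in particular, that the factorisation along the antidiagonal of~\eqref{eq:expDistribution} is exactly compensated by the symplectic character arising from the symmetric insertion. Once this identity is secured, the continuous limit reduces to standard asymptotic analysis of the determinantal Weyl character formulae.
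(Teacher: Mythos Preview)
Your outline is coherent and essentially correct, but it follows a different route from the one the paper attributes to~\cite{BZ19a}. According to the discussion after the proposition, the original derivation in~\cite{BZ19a} does \emph{not} discretise to geometric weights and apply a classical (symmetric) RSK insertion. Instead, it works at positive temperature: one starts from the log-gamma polymer partition function in the same point-to-line and point-to-half-line geometries, applies the \emph{geometric} RSK correspondence (in the sense of Kirillov's tropical combinatorics~\cite{K01, NZ17}) to obtain the Laplace transform as an integral of $\GL_N(\R)$- and $\SO_{2N+1}(\R)$-Whittaker functions, and only then takes the \emph{zero-temperature} limit, under which those Whittaker functions degenerate to the continuous classical and symplectic Schur functions~\eqref{eq:contS}--\eqref{eq:contSp}. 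Your proposal, by contrast, stays at zero temperature throughout: you discretise the exponential environment to a geometric one, invoke a symmetric/Berele-type RSK to land directly on discrete symplectic Schur sums, and then pass to the continuum by a Riemann-sum argument. This is precisely the more direct strategy that the paper's footnote flags as having been carried out subsequently in~\cite{Bis18, BZ19b}. What your route buys is that it avoids the Whittaker-function machinery entirely; what the~\cite{BZ19a} route buys is that it simultaneously yields the positive-temperature polymer formulae, with the LPP identities falling out as a corollary.

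One point to tighten: the combinatorial input you call the ``main obstacle'' is indeed the heart of the matter, but the relevant bijection is not quite Sagan--Stanley (which handles diagonally-symmetric matrices and produces a single standard tableau together with fixed-point data). For symplectic characters one needs a Berele/Sundaram-type insertion, or equivalently the description via symplectic Gelfand--Tsetlin patterns used in~\cite{BZ19a, BZ19b}; you should make explicit which bijection you intend and check that it is compatible with the specific antidiagonal parameter structure~\eqref{eq:expDistribution}.
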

Using the determinantal form of Schur functions~\eqref{eq:contS} and~\eqref{eq:contSp} and the well-known Cauchy-Binet identity, which expresses the multiple integral of two determinantal functions as the determinant of an integral, formulae~\eqref{eq:fLPPint} and~\eqref{eq:hLPPint} can be turned to ratios of determinants and then to Fredholm determinants,  amenable to asymptotic analysis via steepest descent.
\vskip 1mm
It is worth noting that the above formulae have been derived in~\cite{BZ19a} as the zero temperature limit of certain integrals of Whittaker functions that represent the Laplace transform of the log-gamma polymer partition function~\cite{Sep12, COSZ14, OSZ14} 
in the same path geometries.
The derivation of the Laplace transforms for the log-gamma polymer in~\cite{BZ19a} used combinatorial arguments through the geometric Robinson-Schensted-Knuth correspondence~\cite{K01, NZ17}\footnote{Notice that~\eqref{eq:fLPPint} and~\eqref{eq:hLPPint} have now been obtained directly via the standard Robinson-Schensted-Knuth correspondence, avoiding the route of the zero temperature limit, see \cite{Bis18, BZ19b}.}.
Continuous classical and symplectic Schur functions appear then in our context as scaling limits of Whittaker functions associated to the groups $\GL_N(\R)$ and $\SO_{2N+1}(\R)$ respectively.
\vskip 2mm
{\bf Organization of the paper}: In Section~\ref{sec:fredholm} we present a general scheme to turn a ratio of determinants to a Fredholm determinant. In Section~\ref{sec:steepestDescent} we perform the steepest descent analysis of a central integral. In Sections~\ref{sec:fLPP} and~\ref{sec:hLPP} we prove Theorems~\ref{thm:fAsympt} and~\ref{thm:hAsympt} respectively.

\section{From determinants to Fredholm determinants}
\label{sec:fredholm}

In this section we present a general scheme to turn ratios of determinants into a Fredholm determinant. Such a scheme is not new (see for example \cite{O01, J01, BG12}), but we present it here in a fashion adapted to our framework. Let us start by briefly recalling the notion of a Fredholm determinant: Given a measure space $(\mathcal{X},\mu)$, any linear operator $K\colon L^2(\mathcal{X}) \to L^2(\mathcal{X})$ can be defined in terms of its integral kernel $K(x,y)$ by
\[
(Kh)(x) := \int_{\mathcal{X}} K(x,y)h(y) \mu(\diff y) \, ,\qquad h\in L^2(\mathcal{X}) \, .
\]
The \emph{Fredholm determinant} of $K$ can then be defined through its series expansion:
\begin{equation}
\label{eq:fredholmDet}
\det(I + K)_{L^2(\mathcal{X})}
:= 1+\sum_{n=1}^{\infty} \frac{1}{n!} \int_{\mathcal{X}^n}
\det(K(x_i,x_j))_{i,j=1}^n \,
\mu(\diff x_1)\dots \mu(\diff x_n) \, ,
\end{equation}
assuming the series converge. Denoting from now on by $\C_+ := \{z\in\C\colon \Re(z)>0\}$ the complex right half-plane, we can state:
\begin{theorem}
\label{thm:detToFredDet}
Let
\begin{equation}
\label{eq:Hbar}
H(z,w) := C(z,w) - \widehat{H}(z,w) \, ,
\end{equation}
where $C$ is the function
\begin{align*}
C(z,w)=\frac{1}{z+w},\qquad \text{for } (z,w) \in \mathbb{C}^2 \, ,
\end{align*}
 and $\widehat{H}$ is a holomorphic function in the region $\C_+ \times \C_+$. For any choice of positive parameters $\alpha_1,\dots,\alpha_N,\beta_1,\dots,\beta_N$, define the operator $K_N$ on $L^2(\R_+)$ through the kernel
\begin{equation}
\label{eq:kernel}
K_N(\lambda,\xi) := \frac{1}{(2\pi\i)^2} \int_{\Gamma_1} \diff z \int_{\Gamma_2} \diff w \,
\e^{-\lambda z-\xi w}
\widehat{H}(z,w) \,
\prod_{m=1}^N \bigg[ \frac{(z+\beta_m)(w+\alpha_m)}{(z-\alpha_m)(w-\beta_m)} \bigg] \, ,
\end{equation}
where $\Gamma_1,\Gamma_2 \subset \C_+$ are any positively oriented simple closed contours such that $\Gamma_1$ encloses $\alpha_1,\dots,\alpha_N$ and $\Gamma_2$ encloses $\beta_1,\dots,\beta_N$. Then
\begin{equation}
\label{eq:detToFredDet}
\frac{\det(H(\alpha_i,\beta_j))_{1\leq i,j\leq N}}
{\det(C(\alpha_i,\beta_j))_{1\leq i,j\leq N}}
= \det(I - K_N)_{L^2(\R_+)} \, .
\end{equation}
\end{theorem}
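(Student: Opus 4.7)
The plan is to show that $K_N$ is effectively a rank-$N$ operator, reduce the Fredholm determinant to an $N\times N$ determinant, and then match the latter to the ratio on the left-hand side via the classical inversion formula for a Cauchy matrix.

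First I would compute the double contour integral in~\eqref{eq:kernel} by residues. Since $\widehat{H}$ is holomorphic on $\C_+\times\C_+$ and each $\Gamma_l$ lies in $\C_+$, the integrand has simple poles inside $\Gamma_1\times\Gamma_2$ only at $z=\alpha_i$ and $w=\beta_j$ for $i,j=1,\dots,N$. Collecting the residues gives
\[
K_N(\lambda,\xi) = \sum_{i,j=1}^N a_i\,b_j\,\widehat{H}(\alpha_i,\beta_j)\,e^{-\alpha_i\lambda}\,e^{-\beta_j\xi},
\]
with
\[
a_i := \frac{\prod_{m=1}^N(\alpha_i+\beta_m)}{\prod_{m\neq i}(\alpha_i-\alpha_m)}, \qquad
b_j := \frac{\prod_{m=1}^N(\alpha_m+\beta_j)}{\prod_{m\neq j}(\beta_j-\beta_m)},
\]
exhibiting $K_N$ as a rank-$N$ operator on $L^2(\R_+)$.

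Next I would factor $K_N=B\,M\,A$, with $A\colon L^2(\R_+)\to\C^N$ given by $(Af)_j=\int_0^\infty e^{-\beta_j\xi}f(\xi)\diff\xi$, with $B\colon \C^N\to L^2(\R_+)$ given by $(Bv)(\lambda)=\sum_i v_i\,e^{-\alpha_i\lambda}$, and with $M:=\mathbf{A}\,\widehat{\mathbf{H}}\,\mathbf{B}$, where $\mathbf{A}=\mathrm{diag}(a_i)$, $\mathbf{B}=\mathrm{diag}(b_j)$, and $\widehat{\mathbf{H}}=\bigl(\widehat{H}(\alpha_i,\beta_j)\bigr)_{i,j=1}^N$. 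The identity $\det(I-XY)=\det(I-YX)$ (in the Sylvester form, valid here since all operators involved are trace class) then yields
\[
\det(I-K_N)_{L^2(\R_+)} = \det\!\bigl(I_N - M\,(AB)\bigr),
\]
and a direct calculation shows $(AB)_{ji}=\int_0^\infty e^{-(\alpha_i+\beta_j)\lambda}\diff\lambda=1/(\alpha_i+\beta_j)=\mathbf{C}^{\top}_{ji}$, where $\mathbf{C}:=\bigl(C(\alpha_i,\beta_j)\bigr)_{i,j=1}^N$ is the usual Cauchy matrix.

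Finally I would invoke the classical Cauchy inversion formula, which gives $(\mathbf{C}^{-1})_{ij}=a_jb_i/(\alpha_j+\beta_i)$, equivalently $\mathbf{B}\,\mathbf{C}^{\top}\,\mathbf{A}=\mathbf{C}^{-1}$. Cycling inside the determinant and using $H=C-\widehat{H}$,
\[
\det(I_N-M\mathbf{C}^{\top}) = \det(I_N-\mathbf{A}\widehat{\mathbf{H}}\mathbf{B}\mathbf{C}^{\top}) = \det(I_N-\widehat{\mathbf{H}}\,\mathbf{C}^{-1}) = \frac{\det(\mathbf{C}-\widehat{\mathbf{H}})}{\det\mathbf{C}} = \frac{\det\bigl(H(\alpha_i,\beta_j)\bigr)}{\det\bigl(C(\alpha_i,\beta_j)\bigr)},
\]
which is~\eqref{eq:detToFredDet}. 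There are no analytic subtleties, since the finite-rank $K_N$ is trace class and its Fredholm expansion is really a finite polynomial in the entries of $M$; the main obstacle is the purely algebraic bookkeeping in the last step, namely keeping the diagonal conjugations and transpositions aligned with the precise form of the Cauchy inversion formula so that the two $N\times N$ determinants genuinely coincide.
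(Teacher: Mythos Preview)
Your proposal is correct and follows essentially the same approach as the paper, only run in the reverse direction: the paper starts from the ratio $\det\mathcal{H}/\det\mathcal{C}=\det(I-\mathcal{C}^{-1}\widehat{\mathcal{H}})$, factors $\mathcal{C}^{-1}\widehat{\mathcal{H}}$ through $L^2(\R_+)$ via the integral representation of $1/(\alpha_k+\beta_i)$, applies Sylvester, and then rewrites the resulting double sum as the contour integral~\eqref{eq:kernel} by residues; you begin with the contour integral, extract the same double sum by residues, and then use Sylvester together with the Cauchy inverse formula to land on the ratio of determinants. The ingredients (residues, Sylvester's identity, explicit inversion of the Cauchy matrix) are identical in both arguments.
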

Note that the denominator on the left hand side of~\eqref{eq:detToFredDet} is a Cauchy determinant:
\begin{equation}
\label{eq:CauchyDet}
\det(C(\alpha_i,\beta_j))_{i,j}
= \det\bigg(\frac{1}{\alpha_i + \beta_j}\bigg)_{i,j}
= \frac{\prod_{1\leq i<j\leq N} (\alpha_i -\alpha_j)(\beta_i - \beta_j)}
{\prod_{1\leq i,j\leq N} (\alpha_i + \beta_j)} \, .
\end{equation}
\begin{proof}
For convenience, let us denote by $\mathcal{C}$, $\mathcal{H}$ and $\mathcal{\widehat{H}}$ the $N\times N$ matrices $(C(\alpha_i,\beta_j))_{1\leq i,j\leq N}$, $(H(\alpha_i,\beta_j))_{1\leq i,j\leq N}$ and $(\widehat{H}(\alpha_i,\beta_j))_{1\leq i,j\leq N}$ respectively. We then have:
\begin{equation}
\label{eq:detToFredDet1}
\frac{\det(\mathcal{H})}{\det(\mathcal{C})}
= \det\big( \mathcal{C}^{-1}\big( \mathcal{C} - \mathcal{\widehat{H}} \big) \big)
= \det\big( I - \mathcal{C}^{-1} \mathcal{\widehat{H}} \big) \, ,
\end{equation}
where $I$ is the identity matrix of order $N$. To invert $\mathcal{C}$, we use Cramer's formula:
\[
\mathcal{C}^{-1}(i,k) = (-1)^{i+k} \frac{\det\big(\mathcal{C}^{(k,i)}\big)}{\det(\mathcal{C})} \, ,
\]
where $\mathcal{C}^{(k,i)}$ is the matrix of order $N-1$ obtained from $\mathcal{C}$ by removing its $k$-th row and $i$-th column. In our case, both determinants in the above formula are of Cauchy type:
\begin{align*}
\det(\mathcal{C})
&= \prod_{l<m} (\alpha_l -\alpha_m)\prod_{l<m} (\beta_l - \beta_m)
\prod_{l,m} (\alpha_l + \beta_m)^{-1} \\
\det\big(\mathcal{C}^{(k,i)}\big)
&= \prod_{\substack{l<m \\ l,m\neq k}} (\alpha_l -\alpha_m)
\prod_{\substack{l<m \\ l,m\neq i}}(\beta_l - \beta_m)
\prod_{\substack{l\neq k \\ m\neq i}} (\alpha_l + \beta_m)^{-1} \, ,
\end{align*}
where indices $l$ and $m$ run in $\{1,\dots,N\}$.
The inverse of $\mathcal{C}$ is thus readily computed: 
\[
\mathcal{C}^{-1}(i,k)
= \frac{\prod_{m=1}^N
(\alpha_k + \beta_m)
(\beta_i + \alpha_m)}
{(\alpha_k + \beta_i)
\prod_{m\neq k}(\alpha_k-\alpha_m)
\prod_{m\neq i} (\beta_i - \beta_m)} \, .
\]
Writing $(\alpha_k + \beta_i)^{-1} = \int_0^{\infty} \exp(-(\alpha_k + \beta_i)\lambda ) \diff \lambda $, we obtain:
\[
\big(\mathcal{C}^{-1} \mathcal{\widehat{H}} \big)(i,j)
= \sum_{k=1}^N \mathcal{C}^{-1}(i,k) \mathcal{\widehat{H}}(k,j)
= \int_0^{\infty} f(i,\lambda ) g(\lambda ,j) \diff \lambda  \, ,
\]
where for all $\lambda >0$
\[
f(i,\lambda ) := \e^{-\beta_i \lambda } \frac{\prod_{m=1}^N(\beta_i + \alpha_m)}
{\prod_{m\neq i}(\beta_i -\beta_m)} \, ,
\qquad
g(\lambda ,j) := \sum_{k=1}^N \e^{-\alpha_k \lambda } \frac{\prod_{m=1}^N (\alpha_k + \beta_m)}
{\prod_{m\neq k} (\alpha_k - \alpha_m)}
\widehat{H}(\alpha_k,\beta_j) \, .
\]
This proves that matrix $\mathcal{C}^{-1} \mathcal{\widehat{H}}$, viewed as a linear operator on $\R^N$, equals the composition $FG$, where $F$ and $G$ are the linear operators
\begin{align*}
&F \colon L^2(\R_+) \to \R^N \, ,
&&\phi \mapsto \bigg[ \int_0^{\infty} f(i,\lambda ) \phi(\lambda ) \diff \lambda  \bigg]_{i=1}^N \, , \\
&G \colon \R^N \to L^2(\R_+) \, ,
&&(a(j))_{j=1}^N \mapsto \sum_{j=1}^N g(\lambda ,j) a(j) \, .
\end{align*}
We note that these are well-defined operators, as $f(i, \cdot)$ and $g(\cdot,j)$ are square integrable functions on $\R_+$, for all $i$ and $j$. 
We will next use Sylvester's identity, which states that
\begin{equation}
\label{eq:sylvester}
\det(I + K_1 K_2)_{L^2(\mathcal{X}_2)}
= \det(I + K_2 K_1)_{L^2(\mathcal{X}_1)}
\end{equation}
for any trace class operators $K_1\colon L^2(\mathcal{X}_1) \to L^2(\mathcal{X}_2)$ and $K_2\colon L^2(\mathcal{X}_2) \to L^2(\mathcal{X}_1)$.
By applying this identity, we obtain
\begin{equation}
\label{eq:detToFredDet2}
\det\big( I - \mathcal{C}^{-1} \mathcal{\widehat{H}} \big)_{\R^N}
= \det(I - F G)_{\R^N}
= \det(I - K_N)_{L^2(\R_+)} \, ,
\end{equation}
where $K_N:= G F$ is the operator on $L^2(\R_+)$ defined through the kernel
\begin{equation}
\label{eq:kernelDoubleSum}
K_N(\lambda ,\xi ) :=
\sum_{i,k=1}^N \e^{-\alpha_k \lambda  - \beta_i \xi }
\widehat{H}(\alpha_k, \beta_i)
\frac{\prod_{m=1}^N
(\alpha_k + \beta_m)
(\beta_i + \alpha_m)}
{\prod_{m\neq k}(\alpha_k-\alpha_m)
\prod_{m\neq i} (\beta_i - \beta_m)} \, .
\end{equation}
From the latter formula, it is clear that $|K_N(\lambda,\xi)| \leq c_1 \e^{- c_2 \lambda}$ for all $\lambda\in [0,\infty)$, where the positive constants $c_1$ and $c_2$ depend on $N$ and on the parameters. Hadamard's bound then implies that
\[
\abs{ \det(K_N(\lambda_i, \lambda_j))_{i,j=1}^n }
\leq n^{n/2} \prod_{i=1}^n c_1 \e^{-c_2 \lambda_i } \, .
\]
It follows that
\[
\begin{split}
\abs{ \det(I - K_N)_{L^2(\R_+)} }
\leq 1 + \sum_{n=1}^{\infty} \frac{n^{n/2}}{n!} \bigg( \int_{0}^{\infty} c_1 \e^{-c_2 \lambda} \diff \lambda \bigg)^n
< \infty \, ,
\end{split}
\]
hence the right-hand side of~\eqref{eq:detToFredDet2} is indeed a converging Fredholm determinant.
By applying the residue theorem (recalling the assumption that $\widehat{H}(z,w)$ is holomorphic in $\C_+ \times \C_+$), the double sum in~\eqref{eq:kernelDoubleSum} can be turned into a double contour integral, yielding representation~\eqref{eq:kernel} for the kernel. By combining~\eqref{eq:detToFredDet1} and~\eqref{eq:detToFredDet2} we obtain~\eqref{eq:detToFredDet}.
\end{proof}

\section{Steepest descent analysis}
\label{sec:steepestDescent}

Thanks to the results mentioned in section~\ref{sec:model} (formulae~\eqref{eq:fLPPint} and~\eqref{eq:hLPPint}) and to the general scheme introduced in section~\ref{sec:fredholm}, the distribution function of our two dLPP models can be expressed as a Fredholm determinant on $L^2(\R_+)$ with kernel of type~\eqref{eq:kernel}, as we will state precisely in the next two sections (see Theorems~\ref{thm:fFredholm} and~\ref{thm:hFredholm}).
As we will see, in the limit $N\to\infty$ all these kernels converge, after rescaling, to expressions involving Airy functions.
In order to see this, one needs to perform the asymptotic analysis of a few contour integrals via steepest descent.
This procedure is very similar in all cases, as it always involves the same functions. Therefore, we will carry it out in detail only for one of such contour integrals, arguably the most archetypal one as it just approximates the Airy function. Other very similar steepest descent analyses are sketched where needed, specifically in the proof of Theorem~\ref{thm:hAsympt}.

Let us first recall that the Airy function $\Ai$ has the following contour integral representation:
\begin{equation}
\label{eq:Airy}
\Ai(x) := \frac{1}{2\pi \i}
\int_{\e^{-\i\pi/3} \infty}^{\e^{\i\pi/3} \infty}
\exp\bigg\{\frac{z^3}{3} -xz \bigg\} \diff z \, ,
\end{equation}
where the integration path starts at infinity with argument $-\pi/3$ and ends at infinity with argument $\pi/3$ (see for example the red contour in figure~\ref{fig:steepestDescentPath}).

\begin{proposition}
\label{prop:steepestDescent}
For any fixed $\gamma>0$ and $f:=2/\gamma$, let us define
\begin{equation}
\label{eq:steepestDescentInt}
J_N(x) := -\frac{1}{2\pi\i} \int_{\Gamma}
\e^{-z(fN + x)}
\bigg[\frac{\gamma + z}{\gamma - z}\bigg]^N \diff z \, ,
\end{equation}
where $\Gamma\subset \C$ is a positively oriented contour enclosing $\gamma$. Then, for all $x\in\R$,
\begin{equation}
\label{eq:convergence2Airy}
\tilde{J}_N(x)
:= \frac{\sqrt[3]{2N}}{\gamma} J_N\bigg( \frac{\sqrt[3]{2N}}{\gamma} x \bigg)
\xrightarrow{N\to\infty} \Ai(x) \, .
\end{equation}
\end{proposition}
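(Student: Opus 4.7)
The method is standard steepest descent. Write
\[
J_N(\xi) \;=\; -\frac{1}{2\pi\i}\int_\Gamma e^{N\Phi(z)}\,e^{-\xi z}\,\dd z, \qquad \Phi(z) := \log\frac{\gamma+z}{\gamma-z} - \frac{2z}{\gamma},
\]
so that the integrand is a \emph{single-valued} meromorphic function of $z$ (the branches of the two logarithms cancel), with a single pole of order $N$ at $z=\gamma$; hence $\Gamma$ may be freely deformed as long as it keeps $\gamma$ inside. A direct computation gives $\Phi(0)=\Phi'(0)=\Phi''(0)=0$ and $\Phi'''(0)=4/\gamma^3$, so
\[
\Phi(z) \;=\; \frac{2z^3}{3\gamma^3} + O(z^5) \qquad (z\to 0),
\]
i.e.\ $z=0$ is an Airy-type triple saddle, and the steepest-descent rays emanating from it point in the directions $\arg z\in\{\pm\pi/3,\pi\}$.

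Apply the Airy rescaling $z=\gamma w/\sqrt[3]{2N}$. Under the coupled rescaling $\xi=(\sqrt[3]{2N}/\gamma)\,x$ from~\eqref{eq:convergence2Airy}, the Jacobian $\gamma/\sqrt[3]{2N}$ exactly cancels the prefactor $\sqrt[3]{2N}/\gamma$ in the definition of $\tilde J_N(x)$, and one finds
\[
\tilde J_N(x) \;=\; -\frac{1}{2\pi\i}\int_{\Gamma_N'} \exp\!\Big\{ N\Phi\big(\tfrac{\gamma w}{\sqrt[3]{2N}}\big) - xw \Big\}\dd w ,
\]
while $N\Phi(\gamma w/\sqrt[3]{2N})\to w^3/3$ pointwise on compact $w$-sets. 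The plan is then to choose, for each large $N$, a representative $\Gamma_N$ of the homology class of $\Gamma$ that locally near $z=0$ coincides with two straight segments at angles $\pm\pi/3$ (matching the Airy steepest-descent directions) and that closes around the pole $\gamma$ in the right half-plane. Traversed counterclockwise around $\gamma$, such a contour passes through $0$ coming from $e^{\i\pi/3}\infty$ and leaving towards $e^{-\i\pi/3}\infty$; after rescaling, $\Gamma_N'$ expands to the full infinite V-contour from $e^{\i\pi/3}\infty$ through $0$ to $e^{-\i\pi/3}\infty$. The reversed orientation combined with the explicit minus sign reproduces exactly the Airy contour in~\eqref{eq:Airy}, yielding in the limit
\[
\lim_{N\to\infty}\tilde J_N(x) \;=\; \frac{1}{2\pi\i}\int_{e^{-\i\pi/3}\infty}^{e^{\i\pi/3}\infty} e^{w^3/3 - xw}\,\dd w \;=\; \Ai(x).
\]

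The main technical obstacle is justifying the exchange of limit and integral by dominated convergence. What is needed is a globally admissible contour $\Gamma_N$ along which $\Re[N\Phi(z)]$ is controlled: on the rescaled local piece (where the cubic Taylor expansion is accurate, namely $|w|\lesssim \delta\sqrt[3]{2N}$) one needs a bound of the form $\Re[N\Phi(\gamma w/\sqrt[3]{2N})]\leq -c|w|^3$, producing an integrable majorant of the form $C e^{-c|w|^3}$; on the remaining, macroscopic piece of $\Gamma_N$ one needs $\Re\Phi\leq -c'<0$ uniformly, so that its contribution to $\tilde J_N(x)$ is $O(e^{-c'N})$. A concrete choice is to take two straight rays $\{te^{\pm\i\pi/3}:0\le t\le\delta\}$ emanating from $0$, and close the contour through arcs staying in $\{\Re z>\eta\}$ that bypass $\gamma$ on its right; the required inequalities $\Re\Phi(z)<0$ on these arcs can be verified using the convexity of $\Re\Phi$ along such paths together with the asymptotics $\Phi(z)\sim -2z/\gamma$ as $|z|\to\infty$ and a separate explicit check in a neighbourhood of $\gamma$. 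Establishing the existence of such a global ``mountain pass'' contour around the pole is the bulk of the work; once it is in place, dominated convergence together with~\eqref{eq:Airy} finishes the proof.
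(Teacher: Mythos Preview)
Your approach is correct and essentially identical to the paper's: both identify the Airy-type triple saddle at $z=0$, deform $\Gamma$ through it along the $\pm\pi/3$ directions, rescale by $\sqrt[3]{2N}/\gamma$, and separate a local contribution converging to the Airy integral from a tail that is shown to vanish. The paper simply executes what you explicitly leave as a sketch: it takes $\Gamma_N$ to be the concrete triangle with vertices $0$, $2a\e^{\pm \i\pi/3}$ for some $a>\gamma$ (shifted infinitesimally by $\epsilon\gamma/\sqrt[3]{2N}$), splits the integral at radius $\gamma N^{-\alpha}$ with $\alpha\in(1/4,1/3)$, and supplies explicit elementary estimates of $\Re F$ on the oblique rays and on the vertical closing segment $\{\Re z=a\}$ to kill the tail --- so your ``mountain-pass'' contour is just this triangle.
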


\begin{proof}
The proof is based on the steepest descent analysis of the integral
\[
J_N(x)
= - \frac{1}{2\pi\i} \int_{\Gamma}
\exp\big\{ N F(z) - x z \big\} \diff z \, ,
\]
where $F(z):= \log(\gamma+z) - \log(\gamma - z) - fz$. 
We need to compute the critical points of the function $F$, whose first three derivatives are given by:
\begin{align*}
F'(z) &= \frac{1}{\gamma+z} + \frac{1}{\gamma-z} - f \, , \\
F''(z) &= -\frac{1}{(\gamma+z)^2} + \frac{1}{(\gamma-z)^2} \, , \\
F'''(z) &= \frac{2}{(\gamma+z)^3} + \frac{2}{(\gamma-z)^3} \, .
\end{align*}
The second derivative vanishes if and only if $z=0$. As in the statement of the theorem, we then set $f:=2/\gamma$ , which is the only value of $f$ such that the first derivative also vanishes at $z=0$. The first non-vanishing derivative of $F$ at the critical point $z=0$ is then the third one. In particular, we have that
\[
F(0)=F'(0)=F''(0)=0 \, , \qquad
F'''(0)=\frac{4}{\gamma^3} \, ,
\]
hence the Taylor expansion of $F$ near the critical point is
\begin{equation}
\label{eq:expansionCriticalPoint}
F(z) = \frac{2}{3\gamma^3} z^3 + R(z) \, ,
\end{equation}
where $R(z) = o(z^3)$ as $z\to 0$.
\begin{figure}
\begin{tikzpicture}[scale=1.5]
\draw (0,-3.2) -- (0,3.2);
\draw (0,0) -- (2.2,0);

\node[label={[label distance=1pt]180:$0$},draw,circle,inner sep=1pt,fill] (origin) at (0,0) {};

\node[label={[label distance=1pt]0:$2a\e^{\i\pi/3}$},draw,circle,inner sep=1pt,fill] (V+) at (1,1.73) {};

\node[label={[label distance=1pt]0:$2a\e^{-\i\pi/3}$},draw,circle,inner sep=1pt,fill] (V-) at (1,-1.73) {};

\node[label={[label distance=1pt]315:$a$},draw,circle,inner sep=1pt,fill] (a) at (1,0) {};

\node[label={[label distance=1pt]270:$\gamma$},draw,circle,inner sep=1pt,fill] (gamma) at (0.7,0) {};

\node[label={[label distance=5pt]0:$T_a$}] (gamma) at (1,0.8) {};

\node[inner sep=0pt, label={[label distance=5pt]0:\red{$C$}}] (W+) at (1.515,2.595) {};
\node[inner sep=0pt] (W++) at (1.715,2.941) {};
\node[inner sep=0pt] (W-) at (1.515,-2.595) {};
\node[inner sep=0pt] (W--) at (1.715,-2.941) {};

\begin{scope}[thick,decoration={
    markings,
    mark=at position 0.5 with {\arrow[scale=2.5]{stealth}}}
    ] 
\draw[postaction={decorate}] (origin) -- (V+);
\draw[postaction={decorate}] (V-) -- (origin);
\end{scope}

\begin{scope}[thick,decoration={
    markings,
    mark=at position 0.35 with {\arrow[scale=2.5]{stealth}}}
    ] 
\draw[postaction={decorate}] (V+) -- (V-);
\end{scope}
    
\begin{scope}[thick,red]
\draw[postaction={decorate}, decoration={markings, mark=at position 0.15 with {\arrow{>}}}]
(W-) -- (0.015,0);
\draw[postaction={decorate}, decoration={markings, mark=at position 0.85 with {\arrow{>}}}]
(0.015,0) -- (W+);
\draw[dashed] (W+) -- (W++);
\draw[dashed] (W-) -- (W--);
\end{scope}

\end{tikzpicture}
\caption{The red path $C$ is involved in the integral representation of the Airy function. The black contour $T_a$ refers to the steepest descent analysis in the proof of Proposition~\ref{prop:steepestDescent}.}
\label{fig:steepestDescentPath}
\end{figure}
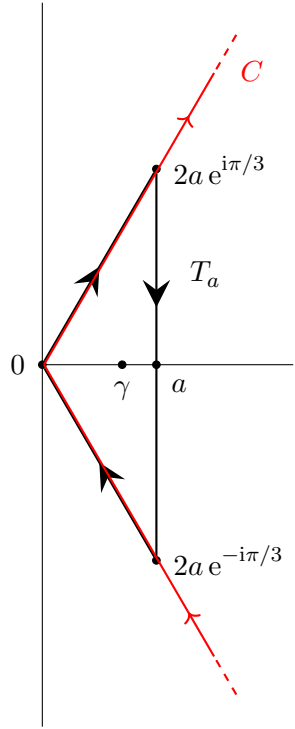
Since the directions of steepest descent of $F$ from $z=0$ correspond to the angles $\pm \pi/3$, we deform the \emph{positively} oriented contour $\Gamma$ into the \emph{negatively} oriented triangular path $T_a$ with vertices $0$, $2a\e^{\i\pi/3}$ and $2a\e^{-\i\pi/3}$ for some $a>\gamma$ (so that the pole $z=\gamma$ is still enclosed, see figure~\ref{fig:steepestDescentPath}). This only implies a change of sign in the integral, corresponding to the change of orientation of the contour. Indeed, in order to obtain the right estimates in the proof of Corollary~\ref{coro:steepestDescentEstimate}, it is convenient to consider an infinitesimal shift of $T_a$, by setting the contour to be $T_a + \epsilon\gamma/\sqrt[3]{2N}$, where $\epsilon > 0$ is an arbitrary constant. Moreover, we split the integral into two regions, i.e.\ a neighborhood of the critical point, where the main contribution of the integral is expected to come from, and its exterior (we choose the neighborhood to be a ball centered at $\epsilon\gamma / \sqrt[3]{2N}$ with radius $\gamma N^{-\alpha}$, where $\alpha>0$ will be suitably specified later on):
\[
J_N(x)
= J_N^{\rm in}(x) + J_N^{\rm ex}(x) \, ,
\]
where
\begin{align*}
J_N^{\rm in}(x)
&:= \frac{1}{2\pi\i} \int_{T_a + \frac{\epsilon\gamma}{\sqrt[3]{2N}}}
\exp\big\{ F(z) N - x z \big\}
\1_{\{\abs{z - \epsilon\gamma/\sqrt[3]{2N}} \leq \gamma N^{-\alpha}\}} \diff z \, , \\
J_N^{\rm ex}(x)
&:= \frac{1}{2\pi\i} \int_{T_a + \frac{\epsilon\gamma}{\sqrt[3]{2N}}}
\exp\big\{ F(z) N - x z \big\}
\1_{\{\abs{z - \epsilon\gamma/\sqrt[3]{2N}} \geq \gamma N^{-\alpha}\}} \diff z \, .
\end{align*}

Let us first focus on the former integral and denote by $C$ the piecewise linear path going from the point at infinity with argument $-\pi/3$ to the origin to the point at infinity with argument $\pi/3$ (see figure~\ref{fig:steepestDescentPath}). We then have that
\[
J_N^{\rm in}(x)
= \frac{1}{2\pi\i} \int_{C + \frac{\epsilon\gamma}{\sqrt[3]{2N}}}
\exp\bigg\{ \frac{2N}{3\gamma^3} z^3 - x z + R(z) N \bigg\}
\1_{\{\abs{z - \epsilon\gamma/\sqrt[3]{2N}} \leq \gamma N^{-\alpha}\}} \diff z \, ,
\]
where $R(z)$ is defined by~\eqref{eq:expansionCriticalPoint}.
If we now rescale both the integration variable and the function  $J_N^{\rm in}$ by the factor $\sqrt[3]{2N}/\gamma$, by setting $w := z \sqrt[3]{2N}/\gamma$ and defining $\tilde{J}_N^{\rm in}$ as in~\eqref{eq:convergence2Airy}, we obtain:
\begin{equation}
\label{eq:steepestDescentIn}
\tilde{J}_N^{\rm in}(x)
= \frac{1}{2\pi\i} \int_{C + \epsilon}
\exp\bigg\{ \frac{w^3}{3} - x w
+  R\bigg( \frac{\gamma}{\sqrt[3]{2N}} w \bigg) N \bigg\}
\1_{\{\abs{w - \epsilon} \leq \sqrt[3]{2} N^{1/3-\alpha}\}}
\diff w \, .
\end{equation}
A standard estimate of the remainder in the Taylor expansion~\eqref{eq:expansionCriticalPoint} yields
\[
\abs{R\bigg( \frac{\gamma}{\sqrt[3]{2N}} w \bigg) N} 
\leq \frac{m}{4!} \abs{\frac{\gamma}{\sqrt[3]{2N}} w}^4 N
\leq \frac{m}{4!} \bigg(\gamma N^{-\alpha} + \frac{\epsilon\gamma}{\sqrt[3]{2N}} \bigg)^4 N
\]
for $\abs{w - \epsilon} \leq \sqrt[3]{2} N^{1/3-\alpha}$, where the constant $m$ is the maximum modulus of $F^{(4)}$ in some fixed neighborhood of the origin. If we take $\alpha > 1/4$, the above expression vanishes as $N\to\infty$. If we further choose $\alpha < 1/3$, the indicator function in~\eqref{eq:steepestDescentIn} converges to $1$, yielding
\begin{equation}
\label{eq:steepestDescentInConv}
\exp\bigg\{ R_N\bigg( \frac{\gamma}{\sqrt[3]{2N}} w \bigg) N \bigg\} \,
\1_{\{\abs{w - \epsilon} \leq \sqrt[3]{2} N^{1/3-\alpha}\}}
\xrightarrow{N\to\infty} 1 \, .
\end{equation}
Since the argument of the points of $C$ is $\pm \pi/3$, we have that
\[
\int_{C + \epsilon}
\abs{\exp\bigg\{\frac{w^3}{3} -xw \bigg\} } \abs{\diff w}
< \infty \, ,
\]
hence by dominated convergence
\[
\tilde{J}_N^{\rm in}(x)
\xrightarrow{N\to\infty} \int_{C + \epsilon}
\exp\bigg\{\frac{w^3}{3} -xw \bigg\} \diff w
= \Ai(x) \, .
\]
Observe that, varying $\epsilon$, we have different integral representations of the Airy function, which are all equivalent thanks to~\eqref{eq:Airy}.

To conclude the proof, it remains to show that
\[
\tilde{J}_N^{\rm ex}(x)
:= \frac{\sqrt[3]{2N}}{2\pi\i\gamma} \int_{T_a \cap \{\abs{z} \geq \gamma N^{-\alpha}\}}
\exp\bigg\{ F\bigg(z + \frac{\epsilon\gamma}{\sqrt[3]{2N}} \bigg) N - x \bigg(z + \frac{\epsilon\gamma}{\sqrt[3]{2N}} \bigg) \frac{\sqrt[3]{2N}}{\gamma} \bigg\}
\diff z
\xrightarrow{N\to\infty} 0 \, .
\]
We may decompose the integration domain as
\[
T_a \cap \{\abs{z} \geq \gamma N^{-\alpha}\}
= \mathcal{V}
\cup \mathcal{O}_N
\cup \overline{\mathcal{V}}
\cup \overline{\mathcal{O}}_N \, ,
\]
where $\mathcal{V}$ and $\mathcal{O}_N$ are the vertical and oblique segments respectively given by
\[
\mathcal{V} := \Big\{\Re(z)=a \, , \,\, 0 \leq \arg(z) \leq \frac{\pi}{3} \Big\} \, ,
\quad
\mathcal{O}_N := \Big\{ \gamma N^{-\alpha} \leq \abs{z} \leq 2a \, , \,\, \arg(z) = \frac{\pi}{3} \Big\} \, ,
\]
and $\overline{\mathcal{V}}$ and $\overline{\mathcal{O}}_N$ are their complex conjugates.
We thus estimate
\begin{equation}
\label{eq:steepestDescentExt}
\abs{\tilde{J}_N^{\rm ex}(x)}
\leq \frac{\mathcal{L}(T_a) \sqrt[3]{2N}}{2\pi\gamma} \exp\bigg\{ \max \bigg[ G_N\bigg(z + \frac{\epsilon\gamma}{\sqrt[3]{2N}} \bigg) \colon z\in \mathcal{V} \cup \mathcal{O}_N \cup \overline{\mathcal{V}} \cup \overline{\mathcal{O}}_N \bigg] \bigg\} \, ,
\end{equation}
where $\mathcal{L}(\cdot)$ denotes the length of a contour and
\[
G_N(z) := \Re[F(z)] N - x \, \Re(z) \frac{\sqrt[3]{2N}}{\gamma} \, .
\]
Since
\[
\Re[F(z)] = \log\abs{\frac{\gamma+z}{\gamma-z}} - \frac{2}{\gamma} \Re(z) \, ,
\]
it is clear that $G_N(\overline{z}) = G_N(z)$. Therefore, it suffices to bound the maximum of $G_N$ over $\mathcal{V}$ and over $\mathcal{O}_N$.
Since $\Re(z)=a$ and $a\leq \abs{z}\leq 2a$ for $z\in\mathcal{V}$, we have that
\begin{equation}
\label{eq:steepestDescentExtVert}
\max_{z\in\mathcal{V}} G_N\bigg(z + \frac{\epsilon\gamma}{\sqrt[3]{2N}} \bigg)
\leq - c N - x a \frac{\sqrt[3]{2N}}{\gamma} \, ,
\end{equation}
where
\begin{equation}
\label{eq:steepestDescentExtVertConst}
c := \frac{2}{\gamma} a - \log\bigg(\frac{2a + \epsilon\gamma +\gamma}{a -\gamma}\bigg) \, .
\end{equation}
If we fix a large enough $a$ such that $c$ is positive, the maximum in~\eqref{eq:steepestDescentExtVert} is asymptotic to $-cN$ and diverges to $-\infty$.
On the other hand, for all $z$ such that $\arg(z)=\pi/3$, we have that
\[
G_N(z)
= \bigg[ \frac{1}{2} \log\frac{\gamma^2 + \gamma\abs{z} + \abs{z}^2}
{\gamma^2 - \gamma\abs{z} + \abs{z}^2}
- \frac{\abs{z}}{\gamma} \bigg] N
- x \frac{\abs{z}}{2} \frac{\sqrt[3]{2N}}{\gamma} \, ,
\]
whose derivative w.r.t.\ the modulus is
\[
\frac{\diff}{\diff \abs{z}} G_N(z)
= -\frac{\abs{z}^2(2\gamma^2+\abs{z}^2)}
{\gamma(\gamma^4+\gamma^2\abs{z}^2+\abs{z}^4)} N
- \frac{x \sqrt[3]{2N}}{2\gamma} \, .
\]
A trivial estimate then gives
\[
\frac{\diff}{\diff \abs{z}} G_N(z)
\leq - \frac{\gamma N^{1-2\alpha} (2\gamma^2 + \gamma^2 N^{-2\alpha})}
{(\gamma^4 + \gamma^2 (2a)^2 + (2a)^4)}
- \frac{x \sqrt[3]{2N}}{2\gamma}
\qquad
\text{for } z\in\mathcal{O}_N \, .
\]
Since $\alpha < 1/3$, no matter the sign of $x$, the above derivative is negative for $N$ large enough, so $G_N(z)$ is decreasing w.r.t.\ $\abs{z}$ in $\mathcal{O}_N$. By continuity, for $N$ large, $G_N(z + \epsilon\gamma/\sqrt[3]{2N})$ is also decreasing w.r.t.\ $\abs{z}$ 
in $\mathcal{O}_N$, hence
\[
\begin{split}
&\max_{z\in \mathcal{O}_N} G_N\bigg(z + \frac{\epsilon\gamma}{\sqrt[3]{2N}} \bigg)
= G_N\bigg(\gamma N^{-\alpha} \e^{\i\pi/3} + \frac{\epsilon\gamma}{\sqrt[3]{2N}} \bigg) \\
= \, &\Bigg[ \log\Bigg| \frac{1 + N^{-\alpha} \e^{\i\pi/3} + \epsilon(2N)^{-1/3}}
{1 - N^{-\alpha} \e^{\i\pi/3} - \epsilon(2N)^{-1/3}} \Bigg|
- N^{-\alpha} -\frac{2\epsilon}{\sqrt[3]{2N}} \Bigg] N
- x\bigg(\frac{N^{-\alpha}}{2} + \frac{\epsilon}{\sqrt[3]{2N}} \bigg) \sqrt[3]{2N} \, .
\end{split}
\]
After a tedious computation, which uses the third order Taylor expansion of $\log(1+\delta)$ as $\delta \to 0$, we obtain that
\begin{equation}
\label{eq:steepestDescentExtObl}
\max_{z\in \mathcal{O}_N} G_N\bigg(z + \frac{\epsilon\gamma}{\sqrt[3]{2N}} \bigg)
= -\frac{2}{3} N^{1-3\alpha} + o(N^{1-3\alpha})
- x (2^{-2/3} N^{1/3-\alpha}
+ \epsilon) \, .
\end{equation}
Since $\alpha < 1/3$, the latter expression is asymptotic to $-(2/3)N^{1-3\alpha}$ and diverges to $-\infty$. Thanks to estimates~\eqref{eq:steepestDescentExt}, \eqref{eq:steepestDescentExtVert} and~\eqref{eq:steepestDescentExtObl}, we thus conclude that $\tilde{J}_N^{\rm ex}(x)$ vanishes (at least choosing a large enough $a$).
\end{proof}

The proof of Proposition~\ref{prop:steepestDescent} directly provides a uniform bound on $\tilde{J}_N$, which will also turn out to be useful in the next sections.

\begin{corollary}
\label{coro:steepestDescentEstimate}
Let $\tilde{J}_N(x)$ be defined as in~\eqref{eq:convergence2Airy} and $s\in\R$. Then, there exist two positive constants $c_1$ and $c_2$ such that
\[
\sup_{N\in\N} \big| \tilde{J}_N(x) \big| \leq c_1 \e^{-c_2 x} \quad\qquad
\forall x\in [s,\infty) \, .
\]
\end{corollary}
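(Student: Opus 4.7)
The plan is to directly inspect the proof of Proposition~\ref{prop:steepestDescent} and extract the desired bound as a by-product, taking $c_2=\epsilon$ (the shift parameter already appearing in that proof) and $c_1$ depending on $s$, $\epsilon$ and the choice of $a$. I would inherit the decomposition $\tilde{J}_N(x) = \tilde{J}_N^{\rm in}(x) + \tilde{J}_N^{\rm ex}(x)$ and the range $1/4<\alpha<1/3$, and bound each piece separately.

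For the interior piece, I would parameterize the shifted contour $C+\epsilon$ by $w = \epsilon + t\e^{\pm\i\pi/3}$ with $t\in[0,\infty)$ and compute
\[
\Re\Bigl(\tfrac{w^{3}}{3}-xw\Bigr)
= -\epsilon x + \tfrac{\epsilon^{3}}{3} + \tfrac{\epsilon^{2}-x}{2}\,t - \tfrac{\epsilon}{2}\,t^{2} - \tfrac{t^{3}}{3}.
\]
For $x\geq s$ the linear-in-$t$ coefficient is bounded above by $(\epsilon^{2}-s)/2$, so the remaining $t$-integral is dominated by the convergent one-dimensional integral $\int_{0}^{\infty}\exp\{(\epsilon^{2}-s)t/2-\epsilon t^{2}/2-t^{3}/3\}\diff t$. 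Combined with the uniform-in-$N$ bound on the Taylor remainder $|R(\gamma w/\sqrt[3]{2N})N|$ along the truncated contour (valid because $\alpha>1/4$, as shown in the proof of Proposition~\ref{prop:steepestDescent}), this lets me factor $\e^{-\epsilon x}$ out of the integrand and yields $|\tilde{J}_{N}^{\rm in}(x)|\leq C_{1}\e^{-\epsilon x}$ uniformly in $N$ and in $x\geq s$.

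For the exterior piece I would reuse the estimates~\eqref{eq:steepestDescentExtVert} and~\eqref{eq:steepestDescentExtObl}, noting that $\mathcal{L}(T_{a})$ is a constant so the prefactor in~\eqref{eq:steepestDescentExt} is only $O(N^{1/3})$, and split according to whether $x\geq 0$ or $s\leq x<0$. When $x\geq 0$ the terms of the form $-x\cdot(\,\cdot\,)$ are non-positive: the oblique bound already carries a factor $\e^{-\epsilon x}$, while the vertical bound carries $\e^{-xa\sqrt[3]{2N}/\gamma}\leq\e^{-xa\sqrt[3]{2}/\gamma}$, which dominates $\e^{-\epsilon x}$ once $a$ is chosen so that $a\sqrt[3]{2}/\gamma\geq\epsilon$; the $O(N^{1/3})$ prefactor is then beaten by the decay $\e^{-cN}$ on the vertical segment and by $\e^{-(2/3)N^{1-3\alpha}}$ on the oblique segment. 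When $x\in[s,0)$ one has $\e^{-\epsilon x}\geq 1$, so it suffices to establish a uniform-in-$N$ bound; the positive contribution $|x|\cdot O(N^{1/3})$ appearing in the exponent is absorbed by $cN$ on the vertical and by $(2/3)N^{1-3\alpha}$ on the oblique (the latter uses $1-3\alpha>1/3-\alpha$, i.e.\ $\alpha<1/3$). This gives $|\tilde{J}_{N}^{\rm ex}(x)|\leq C_{2}\e^{-\epsilon x}$ uniformly, and the corollary follows with $c_{1}=C_{1}+C_{2}$. The only real obstacle is the bookkeeping in the case $x<0$, where one must verify that the unfavorable $N^{1/3}$-scale terms coming from $-x\Re(w)$ are dominated by the $N$ and $N^{1-3\alpha}$ scales produced by $\Re F(z)\cdot N$; but this is precisely the content of the inequalities already established in the proof of Proposition~\ref{prop:steepestDescent}.
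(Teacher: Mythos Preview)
Your proposal is correct and follows essentially the same approach as the paper: both inherit the decomposition $\tilde{J}_N = \tilde{J}_N^{\rm in} + \tilde{J}_N^{\rm ex}$ and the estimates~\eqref{eq:steepestDescentIn}, \eqref{eq:steepestDescentExt}, \eqref{eq:steepestDescentExtVert}, \eqref{eq:steepestDescentExtObl} from the proof of Proposition~\ref{prop:steepestDescent}. The only notable difference is that the paper first reduces to the case $s=0$ by observing that the convergent sequence $\tilde{J}_N(x)$ is automatically bounded uniformly in $N$ on any compact set, which spares the case split $x\geq 0$ versus $s\leq x<0$ that you carry out by hand; your route is slightly more laborious but equally valid.
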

\begin{proof}
Since by continuity the converging sequence $\tilde{J}_N(x)$ is bounded uniformly in $N$ on any compact set, it suffices to prove the claim for $s=0$.
The proof is then a straightforward consequence of the estimates obtained in the proof of Proposition~\ref{prop:steepestDescent}.
Using the notation adopted there, we will show that the uniform exponential bound is valid for both $\tilde{J}^{\rm in}_N$ and $\tilde{J}^{\rm ex}_N$, i.e.\ the contributions near and away from the critical point respectively.
From~\eqref{eq:steepestDescentIn} and~\eqref{eq:steepestDescentInConv}, it follows that for all $x \in [0,\infty)$
\[
\sup_{N\in\N} \big| \tilde{J}^{\rm in}_N(x) \big|
\lesssim \e^{-\epsilon x}
\int_{C+\epsilon} \e^{\Re(w^3)/3} \abs{\diff w}
\, ,
\]
with $\epsilon$ chosen to be strictly positive.
By definition of the contour $C$, the above integral converges, providing the desired exponential bound for $\tilde{J}^{\rm in}_N$. On the other hand, estimates~\eqref{eq:steepestDescentExt}, \eqref{eq:steepestDescentExtVert} and~\eqref{eq:steepestDescentExtObl} show that for all $N\in\N$ and $x\in [0,\infty)$
\[
\begin{split}
\big| \tilde{J}^{\rm ex}_N(x) \big|
&\leq
\bigg[ \frac{\mathcal{L}(T_a) \sqrt[3]{2N}}{2\pi\gamma} \e^{ - \min\{ cN, (2/3) N^{1-3\alpha} + o(N^{1-3\alpha}) \} } \bigg]
\e^{-x \min\{ a\sqrt[3]{2N}/\gamma, 2^{-2/3} N^{1/3-\alpha} + \epsilon \}} \\
&\leq c' \e^{-x \min\{ a\sqrt[3]{2}/\gamma, 2^{-2/3} + \epsilon \}} \, ,
\end{split}
\]
where $c$ is the constant (positive if $a$ is chosen large enough) defined in~\eqref{eq:steepestDescentExtVertConst}, and $c'$ is an upper bound for the vanishing sequence inside the square bracket above.
This provides the desired exponential bound for $\tilde{J}^{\rm ex}_N$.
\end{proof}

\section{Point-to-line last passage percolation and GOE}
\label{sec:fLPP}

We will now specialize the results of sections~\ref{sec:fredholm} and~\ref{sec:steepestDescent} to the models described in section~\ref{sec:model}. We first analyze the point-to-line dLPP, writing its distribution function as a Fredholm determinant. The starting point is the next theorem, which is a result of \eqref{eq:contSp} and \eqref{eq:fLPPint}, combined with the Cauchy-Binet identity.
\begin{theorem}[\cite{BZ19a}]
\label{thm:fLPPdet}
The distribution function of the point-to-line directed last passage percolation $\fTau_{2N}$ with exponential waiting times as in~\eqref{eq:expDistribution} is a ratio of $N\times N$ determinants:
\begin{align}
\label{eq:fLPPdet}
\P\big(\fTau_{2N}\leq u\big) &= 
\frac{\det\big(\fH_u(\alpha_i,\beta_j)\big)_{1\leq i,j\leq N}}{\det(C(\alpha_i,\beta_j))_{1\leq i,j\leq N}}
\end{align}
for $u>0$, where $C(z,w) := (z+w)^{-1}$ and
\begin{align*}
\fH_u(z,w) &:= \e^{-u(z + w)} \int_0^u (\e^{z x}-\e^{-z x})
(\e^{w x}-\e^{-w x}) \diff x \, .
\end{align*}
\end{theorem}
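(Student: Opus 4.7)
The plan is to derive \eqref{eq:fLPPdet} from the integral representation \eqref{eq:fLPPint} by substituting the explicit determinantal form \eqref{eq:contSp} of the continuous symplectic Schur functions and then invoking the Andr\'eief (Cauchy--Binet) identity to collapse the $N$-fold integral into a single $N\times N$ determinant.

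Concretely, writing $D(\bm\gamma):=\prod_{i<j}(\gamma_i-\gamma_j)\prod_{i\le j}(\gamma_i+\gamma_j)$, formula \eqref{eq:contSp} gives
\begin{equation*}
\sp^{\rm cont}_{\bm\alpha}(\bm x)\,\sp^{\rm cont}_{\bm\beta}(\bm x)
= \frac{\det(\e^{\alpha_j x_i}-\e^{-\alpha_j x_i})_{i,j=1}^N\,\det(\e^{\beta_j x_i}-\e^{-\beta_j x_i})_{i,j=1}^N}{D(\bm\alpha)\,D(\bm\beta)}.
\end{equation*}
The product of the two determinants is symmetric in $x_1,\dots,x_N$ (each swap introduces two sign changes), so Andr\'eief's identity applies over the simplex $\{0\le x_N\le\cdots\le x_1\le u\}$ and converts the multiple integral into a single determinant with $(i,j)$-entry
\begin{equation*}
\int_0^u(\e^{\alpha_i x}-\e^{-\alpha_i x})(\e^{\beta_j x}-\e^{-\beta_j x})\,\diff x = \e^{u(\alpha_i+\beta_j)}\,\fH_u(\alpha_i,\beta_j).
\end{equation*}
Pulling $\e^{u\alpha_i}$ out of row $i$ and $\e^{u\beta_j}$ out of column $j$ by multilinearity yields a factor $\e^{u\sum_k(\alpha_k+\beta_k)}$ that exactly cancels the prefactor in \eqref{eq:fLPPint}. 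At this stage I obtain
\begin{equation*}
\P(\fTau_{2N}\le u) = \frac{c(\bm\alpha,\bm\beta)}{D(\bm\alpha)\,D(\bm\beta)}\,\det\bigl(\fH_u(\alpha_i,\beta_j)\bigr)_{i,j=1}^N,
\end{equation*}
where $c(\bm\alpha,\bm\beta)$ is the (a priori unknown) constant hidden inside the proportionality sign of \eqref{eq:fLPPint}.

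To pin down this constant and identify the denominator with the Cauchy determinant \eqref{eq:CauchyDet}, I would pass to the limit $u\to\infty$: a direct expansion of $(\e^{zx}-\e^{-zx})(\e^{wx}-\e^{-wx})$ and termwise integration shows $\fH_u(z,w)\to 1/(z+w)=C(z,w)$ whenever $\Re z,\Re w>0$, so $\det(\fH_u(\alpha_i,\beta_j))\to\det(C(\alpha_i,\beta_j))$. Since the left-hand side saturates to $1$, the prefactor must satisfy $c(\bm\alpha,\bm\beta)/[D(\bm\alpha)D(\bm\beta)]=1/\det(C(\alpha_i,\beta_j))$, producing exactly \eqref{eq:fLPPdet}.

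The essential analytical ingredient is the Andr\'eief identity, and once \eqref{eq:contSp} is in hand this step is entirely routine. The only genuine difficulty is the bookkeeping: one must track all the $\bm\alpha,\bm\beta$-dependent factors coming out of the symplectic Schur denominators and verify that they combine with $c(\bm\alpha,\bm\beta)$ to yield precisely the Cauchy ratio. The $u\to\infty$ trick circumvents the need to compute $c(\bm\alpha,\bm\beta)$ independently; alternatively one could read it off from the derivation of \eqref{eq:fLPPint} in \cite{BZ19a}, but this would duplicate work already done there.
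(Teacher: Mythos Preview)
Your proposal is correct and follows exactly the route the paper indicates: the paper does not give a detailed proof of this theorem but states that it ``is a result of~\eqref{eq:contSp} and~\eqref{eq:fLPPint}, combined with the Cauchy--Binet identity,'' deferring to~\cite{BZ19a} for details. Your write-up fleshes out precisely this outline, and the $u\to\infty$ argument to pin down the normalization constant (which the paper's proportionality sign in~\eqref{eq:fLPPint} leaves implicit) is a clean way to avoid tracking the constant through the derivation in~\cite{BZ19a}.
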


We next obtain the Fredholm determinant.
\begin{theorem}
\label{thm:fFredholm}
The distribution of $\fTau_{2N}$ with exponential waiting times as in~\eqref{eq:expDistribution} is given by
\begin{equation}
\label{eq:fFredholm}
\P\big(\fTau_{2N}\leq u\big) = \det(I - \fK_{N,u})_{L^2(\R_+)} \, ,
\end{equation}
where $\fK_{N,u} \colon L^2(\R_+)\to L^2(\R_+)$ is the operator defined through the kernel
\begin{equation}
\label{eq:fKernel}
\fK_{N,u}(\lambda,\xi)
= \frac{1}{(2\pi\i)^2}
\int_{\Gamma_1} \diff z
\int_{\Gamma_2} \diff w \,
\e^{-\lambda z - \xi w} \,
\fHbar_u(z,w)
\prod_{m=1}^N \frac{(z+\beta_m)(w+\alpha_m)}{(z-\alpha_m)(w-\beta_m)} \, .
\end{equation}
Here, $\Gamma_1,\Gamma_2 \subset \C_+$ are any positively oriented simple closed contours such that $\Gamma_1$ encloses $\alpha_1,\dots,\alpha_N$, $\Gamma_2$ encloses $\beta_1,\dots,\beta_N$ as well as the whole $\Gamma_1$, and
\begin{equation}
\label{eq:fHbar}
\fHbar_u(z,w)
= \frac{\e^{-2uz}}{w-z}
+ \frac{\e^{-2uw}}{z-w}
+ \frac{\e^{-2u(z+w)}}{z+w} \, .
\end{equation}
\end{theorem}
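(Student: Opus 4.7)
The plan is to apply Theorem~\ref{thm:detToFredDet} to the ratio of determinants in Theorem~\ref{thm:fLPPdet}. The task reduces to exhibiting $\fH_u$ in the form $C - \fHbar_u$ with $\fHbar_u$ holomorphic on $\C_+\times\C_+$, at which point the Fredholm determinant identity~\eqref{eq:detToFredDet} together with the contour–integral representation~\eqref{eq:kernel} of the kernel yields~\eqref{eq:fFredholm}--\eqref{eq:fKernel} directly.

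To produce the decomposition I would first evaluate the elementary integral defining $\fH_u$. Writing the integrand as $4\sinh(zx)\sinh(wx)=2\cosh((z+w)x)-2\cosh((z-w)x)$ and integrating term by term gives
\[
\int_0^u(\e^{zx}-\e^{-zx})(\e^{wx}-\e^{-wx})\dd x
=\frac{2\sinh(u(z+w))}{z+w}-\frac{2\sinh(u(z-w))}{z-w}.
\]
Multiplying through by $\e^{-u(z+w)}$ and collecting terms yields
\[
\fH_u(z,w)=\frac{1-\e^{-2u(z+w)}}{z+w}+\frac{\e^{-2uz}-\e^{-2uw}}{z-w},
\]
which is exactly $C(z,w)-\fHbar_u(z,w)$ for $\fHbar_u$ as in~\eqref{eq:fHbar}.

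Next I would verify that $\fHbar_u$ is holomorphic on $\C_+\times\C_+$. The summand $\e^{-2u(z+w)}/(z+w)$ is holomorphic there since $\Re(z+w)>0$, while the remaining two terms combine as $\e^{-2uz}/(w-z)+\e^{-2uw}/(z-w)=(\e^{-2uw}-\e^{-2uz})/(z-w)$, whose apparent singularity along $\{z=w\}$ is removable (numerator and denominator vanish to the same order), so they extend to an entire function of $(z,w)$. With this checked, Theorem~\ref{thm:detToFredDet} applies verbatim to give~\eqref{eq:fFredholm} with kernel~\eqref{eq:fKernel}. The statement of Theorem~\ref{thm:detToFredDet} only requires $\Gamma_1$ to enclose the $\alpha_m$'s and $\Gamma_2$ to enclose the $\beta_m$'s inside $\C_+$; since $\fHbar_u$ has no singularities in $\C_+\times\C_+$, we are free to further enlarge $\Gamma_2$ so as to enclose the whole of $\Gamma_1$ as well, which is immaterial for the present identity but will be convenient for the steepest descent analysis in later sections.

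I expect no serious obstacle here: the one subtle point is recognising the apparent pole at $z=w$ in $\fHbar_u$ as removable, so that holomorphicity on all of $\C_+\times\C_+$ holds and Theorem~\ref{thm:detToFredDet} can be invoked without modification.
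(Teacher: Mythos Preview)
Your proposal is correct and follows essentially the same route as the paper: apply Theorem~\ref{thm:detToFredDet} to the ratio of determinants from Theorem~\ref{thm:fLPPdet}, after computing the elementary integral to write $\fH_u = C - \fHbar_u$ with $\fHbar_u$ as in~\eqref{eq:fHbar}. The only cosmetic difference is that the paper sidesteps the apparent pole at $z=w$ by stipulating from the outset that $\Gamma_2$ encloses $\Gamma_1$ (so $z\neq w$ on the contours), whereas you first observe that the singularity is removable and then note the contour inclusion is a free choice; both handle the same point and neither adds or loses anything substantive.
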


\begin{proof}
The claim is an immediate consequence of Theorem~\ref{thm:fLPPdet} (see formula~\eqref{eq:fLPPdet}) and Theorem~\ref{thm:detToFredDet}. According to~\eqref{eq:Hbar}, function $\fHbar_u$ in~\eqref{eq:fKernel} is defined through the relation $\fH_u = C - \fHbar_u$ (using the notation of Theorem~\ref{thm:fLPPdet} for $\fH_u$),  i.e.
\[
\fHbar_u(z,w)
= \frac{1}{z+w} - \e^{-u(z + w)} \int_0^u (\e^{z x}-\e^{-z x})
(\e^{w x}-\e^{-w x}) \diff x \, .
\]
If we assume that $\Gamma_2$ encloses $\Gamma_1$ (so that $z\neq w$ for all $z,w$), integrating the above expression yields~\eqref{eq:fHbar}. By symmetry, the other inclusion would lead to similar results.
\end{proof}

\begin{proof}[{\bf Proof of Theorem~\ref{thm:fAsympt}}]
The starting point is the Fredholm determinant formula of Theorem~\ref{thm:fFredholm}. 
We will first show the pointwise convergence of the kernel after suitable rescaling, and next sketch the (standard) argument for the convergence of the Fredholm determinant.
Setting $\alpha_m=\beta_m=\gamma$ for all $m$, so that the waiting times are all exponential with rate $2\gamma$ (see parametrization~\eqref{eq:expDistribution}), kernel~\eqref{eq:fKernel} reads as
\[
\fK_{N,u}(\lambda,\xi)
= \frac{1}{(2\pi\i)^2}
\int_{\Gamma_1} \diff z
\int_{\Gamma_2} \diff w \,
\e^{-\lambda z - \xi w} \,
\fHbar_u(z,w)
\bigg[ \frac{(z+\gamma)(w+\gamma)}{(z-\gamma)(w-\gamma)} \bigg]^N \, ,
\]
where $\fHbar_u$ is given by formula~\eqref{eq:fHbar}.
Our kernel is thus a sum of three double contour integrals, each corresponding to one of the addends in~\eqref{eq:fHbar}. In the second one \emph{only}, we swap the two contours taking into account the residue at the pole $w=z$. We can then readily write the kernel as the sum of four terms:
\[
\fK_{N,u}
= \fKone_{N,u} + \fKtwo_{N,u} + \fKthree_{N,u} + \fKfour_{N,u} \, ,
\]
where the first one corresponds to the above mentioned residue:
\begin{align}
\label{eq:fKer1}
\fKone_{N,u}(\lambda,\xi)
&:= - \frac{1}{2\pi\i} \int_{\Gamma_1} \diff z \,
\e^{-(2u + \lambda+\xi) z}
\bigg[\frac{\gamma+z}{\gamma-z}\bigg]^{2N} \, ,
\end{align}
and the other three terms are
\begin{align}
\label{eq:fKer2}
\fKtwo_{N,u}(\lambda,\xi)
&:= \frac{1}{(2\pi\i)^2}
\int_{\Gamma_1} \diff z \,
\int_{\Gamma_2} \diff w \, \e^{-\lambda z - \xi w}
\frac{\e^{-2uz}}{w-z}
\bigg[ \frac{(z+\gamma)(w+\gamma)}{(z-\gamma)(w-\gamma)} \bigg]^N \, , \\
\label{eq:fKer3}
\fKthree_{N,u}(\lambda,\xi)
&:= \fKtwo_{N,u}(\xi,\lambda) \, , \\
\label{eq:fKer4}
\fKfour_{N,u}(\lambda,\xi)
&:= \frac{1}{(2\pi\i)^2}
\int_{\Gamma_1} \diff z \,
\int_{\Gamma_2} \diff w \, \e^{-\lambda z - \xi w}
\frac{\e^{-2u(z+w)}}{z+w}
\bigg[ \frac{(z+\gamma)(w+\gamma)}{(z-\gamma)(w-\gamma)} \bigg]^N \, .
\end{align}

{ \bfseries Step 1: Main contribution in the kernel.} The Airy kernel emerges from a rescaling of $\fKone_{N,u}$ through Proposition~\ref{prop:steepestDescent}, whereas the other terms turn out to be negligible under the same rescaling, as we will see. From now on, fixing $r\in\R$ once for all, we will take $u$ to be $u_N:=2N/\gamma+rN^{1/3}$, as in~\eqref{eq:fAsympt}. Moreover, we denote by $\tilde{\Psi}$ the rescaling of any function $\Psi(\lambda,\xi)$ by the factor $\sqrt[3]{2N}/\gamma$:
\begin{equation}
\label{eq:rescaling}
\tilde{\Psi}(\lambda,\xi)
:= \frac{\sqrt[3]{2N}}{\gamma} \Psi\bigg( \frac{\sqrt[3]{2N}}{\gamma} \lambda, \frac{\sqrt[3]{2N}}{\gamma} \xi \bigg) \, .
\end{equation}
By Proposition~\ref{prop:steepestDescent}, $\fKonetilde_{N,u_N}$ has a non-trivial limit:
\[
\begin{split}
\fKonetilde_{N,u_N}(\lambda,\xi)
= &- \frac{\sqrt[3]{4N}}{\sqrt[3]{2}\gamma}
\frac{1}{2\pi\i} \int_{\Gamma_1} \diff z \,
\exp\bigg\{-z\bigg[\frac{2}{\gamma} 2N +\bigg(\frac{\lambda + \xi}{\sqrt[3]{2}} + 2^{1/3}\gamma r \bigg) \frac{\sqrt[3]{4N}}{\gamma} \bigg]\bigg\}
\bigg[\frac{\gamma+z}{\gamma-z}\bigg]^{2N} \\
\xrightarrow{N\to\infty} 
& \, 2^{-1/3} \Ai\big(2^{-1/3}(\lambda+\xi)+2^{1/3}\gamma r \big) \, .
\end{split}
\]
We thus need to replace our whole kernel with its rescaling by the factor $\sqrt[3]{2N}/\gamma$:
\[
\fKtilde_{N,u_N}
= \fKonetilde_{N,u_N} +\fKtwotilde_{N,u_N} +\fKthreetilde_{N,u_N} + \fKfourtilde_{N,u_N}
\, .
\]
This does not affect formula~\eqref{eq:fFredholm}, as it just amounts to a change of variables in the multiple integrals defining the Fredholm determinant expansion (see~\eqref{eq:fredholmDet}), so that:
\[
\P\big(\fTau_{2N}\leq u_N\big)
= \det(I - \fKtilde_{N,u_N})_{L^2(\R_+)}
\, .
\]

{ \bfseries Step 2: Vanishing terms in the kernel.} We will now show that all the remaining terms $\tilde{K}^{{\rm flat},i}_{N,u_N}(\lambda,\xi)$ for $i=2,3,4$ vanish, starting from $\fKtwotilde_{N,u_N}(\lambda,\xi)$. For this purpose, we specify the contours appropriately. We choose $\Gamma_1$ to be a circle of radius $\rho_1$ around $\gamma$, where $0<\rho_1<\gamma$. Next, we choose $\Gamma_2$ to be a semicircle of radius $\rho_2$ centered at $\delta$, where $0<\delta < \gamma -\rho_1$, composed by concatenating the segment $\delta + \i [-\rho_2,\rho_2]$ and the arc parametrized by $\delta + \rho_2 \e^{\i \theta}$ for $\theta \in [-\pi/2,\pi/2]$. It is clear that both contours lie in the right half-plane and, for $\rho_2$ large enough, $\Gamma_2$ encloses $\Gamma_1$.
Rescaling~\eqref{eq:fKer2}, setting $u:=u_N$, and using the fact that $\lambda,\xi\geq 0$ and $\delta \leq \Re(z),\Re(w) \leq \delta + \rho_2$ for $z\in \Gamma_1$ and $w\in \Gamma_2$, we estimate
\begin{equation}
\begin{split}
\label{eq:fKer2Estimate}
\abs{\fKtwotilde_{N,u_N}(\lambda,\xi)}
&\leq \frac{(2N)^{1/3} \mathcal{L}(\Gamma_1) \mathcal{L}(\Gamma_2)}{\gamma (2\pi)^2 \rm{dist}(\Gamma_1,\Gamma_2)}
\e^{- (\lambda +\xi) \delta \sqrt[3]{2N} / \gamma }
\e^{(m_1 + m_2) N + 2 \abs{r} (\delta + \rho_2) N^{1/3} } \\
&\leq c \, \e^{- (\lambda +\xi) \delta \sqrt[3]{2} / \gamma }
\exp\bigg\{(m_1 + m_2) N + 2 \abs{r} (\delta + \rho_2) N^{1/3} + \frac{1}{3} \log N \bigg\} \, .
\end{split}
\end{equation}
In the first inequality, we have denoted by $\mathcal{L}( \cdot)$ the length of a curve and by $\rm{dist}( \cdot, \cdot)$ the Euclidean distance in $\C$. In the second inequality, $c$ is a constant depending on the parameters $\gamma$, $\delta$, $\rho_1$ and $\rho_2$ only, whereas $m_1$ and $m_2$ are defined by
\begin{align*}
m_1 := \max_{z \in \Gamma_1} \bigg\{ -\frac{4}{\gamma}\Re(z) + 
\log \abs{\frac{z + \gamma}{z - \gamma}} \bigg\}  \, , \qquad\quad
m_2 := \max_{w \in \Gamma_2} \log \abs{\frac{w + \gamma}{w - \gamma}} \, .
\end{align*}
A trivial estimate yields
\[
m_1 \leq -4\bigg(1-\frac{\rho_1}{\gamma} \bigg)
+ \log\bigg(1+ 2 \frac{\gamma}{\rho_1}\bigg) \, .
\]
Now, the function
\[
g(t) := -4(1-t) + \log\Big(1 + \frac{2}{t}\Big)
\]
attains its minimum for $t\in (0,1)$ at $t_0 :=\sqrt{3/2}-1$, with $g(t_0) <0$; hence, choosing $\rho_1 := t_0 \gamma$, we have that $m_1<0$.
On the other hand, we estimate
\[
\begin{split}
m_2
\leq \max_{\Re(w)=\delta} \log \abs{\frac{w + \gamma}{w - \gamma}}
+ \max_{\abs{w-\delta}=\rho_2} \log \abs{\frac{w + \gamma}{w - \gamma}}
\leq \log\frac{\gamma+\delta}{\gamma-\delta}
+ \log \frac{\rho_2 +\delta + \gamma}{\rho_2 +\delta - \gamma}  \, .
\end{split}
\]
We can now choose $\delta>0$ small enough and $\rho_2$ big enough such that  $m_2 < -m_1$. It thus follows that, for certain values of $\rho_1$, $\rho_2$ and $\delta$, the quantity after the last inequality in~\eqref{eq:fKer2Estimate} decays exponentially with rate $N$, allowing us to conclude that $\fKtwotilde_{N,u_N}(\lambda,\xi)$ vanishes as $N\to\infty$, for all $\lambda,\xi\in\R_+$. Note that, in~\eqref{eq:fKer2Estimate}, the  exponential containing variables $\lambda$ and $\xi$ does not play any role here, but will provide a useful estimate in the next step.

Because of~\eqref{eq:fKer3}, we have that estimate~\eqref{eq:fKer2Estimate} is exactly valid for $\fKthreetilde_{N,u_N}(\lambda,\xi)$ too, so that this term also vanishes.

Finally, an estimate similar to~\eqref{eq:fKer2Estimate} holds for $\fKfourtilde_{N,u_N}(\lambda,\xi)$: To see this, we make the same contour choice as we made in that case with the aim of showing that $\fKtwotilde_{N,u_N}(\lambda,\xi)$ vanishes. Rescaling~\eqref{eq:fKer4} and setting $u:=u_N$, we then obtain
\begin{equation}
\begin{split}
\label{eq:fKer4Estimate}
\abs{\fKfourtilde_{N,u_N}(\lambda,\xi)}
&\leq \frac{(2N)^{1/3} \mathcal{L}(\Gamma_1) \mathcal{L}(\Gamma_2)}{\gamma (2\pi)^2 \rm{dist}(\Gamma_1,-\Gamma_2)}
\e^{- (\lambda +\xi) \delta \sqrt[3]{2N} / \gamma }
\e^{(m_1 + m_2) N + 4 \abs{r} (\delta + \rho_2) N^{1/3} } \\
&\leq c' \e^{- (\lambda +\xi) \delta \sqrt[3]{2} / \gamma }
\exp\bigg\{(m_1 + m_2) N + 4 \abs{r} (\delta + \rho_2) N^{1/3} + \frac{1}{3} \log N \bigg\} \, ,
\end{split}
\end{equation}
where the constant $c'$ depends on $\gamma$, $\delta$, $\rho_1$ and $\rho_2$ only.
We have already proved that $m_1 + m_2 <0$ for a certain choice of $\rho_1$, $\rho_2$ and $\delta$, hence $\fKfourtilde_{N,u_N}(\lambda,\xi)$ also vanishes.

{ \bfseries Step 3: Convergence of Fredholm determinants.} In the first two steps, we have proven that
\begin{equation}
\label{eq:fKerConvergence}
\lim_{N\to\infty} \fKtilde_{N,u_N}(\lambda,\xi)
= 2^{-1/3} \Ai\big(2^{-1/3}(\lambda+\xi)+ 2^{1/3}\gamma r \big)
\end{equation}
for all $\lambda,\xi\in\R_+$.
We now need to show the convergence of the corresponding Fredholm determinants on $L^2(\R_+)$.
The argument is standard, and based on the series expansion~\eqref{eq:fredholmDet} of the Fredholm determinant.
Notice first that there exist two positive constants $c_1$ and $c_2$ such that
\[
\sup_{N\in\N} \abs{\fKtilde_{N,u_N}(\lambda,\xi)}
\leq c_1 \e^{-c_2 \lambda}
\]
for all $\lambda,\xi\in\R_+$. The exponential bound for $\fKonetilde_{N,u_N}$ comes from Corollary~\ref{coro:steepestDescentEstimate}, whereas the estimates for the remaining terms directly follow from~\eqref{eq:fKer2Estimate} and~\eqref{eq:fKer4Estimate}. Hadamard's bound then implies that
\[
\abs{ \det(\fKtilde_{N,u_N}(\lambda_i, \lambda_j))_{i,j=1}^n }
\leq n^{n/2} \prod_{i=1}^n c_1 \e^{-c_2 \lambda_i } \, .
\]
It follows that
\[
\begin{split}
\abs{ \det(I - \fKtilde_{N,u_N})_{L^2(\R_+)} }
&\leq 1 + \sum_{n=1}^{\infty} \frac{1}{n!} \int_0^{\infty} \dots \int_0^{\infty} \abs{ \det(K_N(\lambda_i, \lambda_j))_{i,j=1}^n } \diff \lambda_1 \cdots \lambda_n \\
& \leq 1 + \sum_{n=1}^{\infty} \frac{n^{n/2}}{n!} \bigg( \int_{0}^{\infty} c_1 \e^{-c_2 \lambda} \diff \lambda \bigg)^n
< \infty \, .
\end{split}
\]
These inequalities, apart from providing a further proof that the Fredholm determinants of our kernels are well-defined, allow us to conclude, by dominated convergence, that limit~\eqref{eq:fKerConvergence} still holds when passing to the corresponding Fredholm determinants on $L^2(\R_+)$. 
After a rescaling of the limiting kernel by a factor $2^{-2/3}$, one can see that the operator on $L^2(\R_+)$ defined through the kernel $(\lambda,\xi) \mapsto 2^{-1/3} \Ai\big(2^{-1/3}(\lambda+\xi)+s\big)$ has the same Fredholm determinant as the operator $\mathcal{K}_1$ on $L^2([s,\infty))$ defining the Tracy-Widom GOE distribution $F_1(s)$ as in~\eqref{eq:GOE}. This concludes the proof.
\end{proof}

\section{Point-to-half-line last passage percolation and ${\rm Airy}_{2\to 1}$}
\label{sec:hLPP}

In this section, we analyze the point-to-half-line dLPP, using again the general results of sections~\ref{sec:fredholm} and~\ref{sec:steepestDescent}. 
The starting point is the next theorem, which is a result of~\eqref{eq:contS}, \eqref{eq:contSp} and~\eqref{eq:hLPPint}, combined with the Cauchy-Binet identity.
\begin{theorem}[\cite{BZ19a}]
\label{thm:hLPPdet}
The distribution function of the point-to-half-line directed last passage percolation $\hTau_{2N}$ with exponential waiting times as in~\eqref{eq:expDistribution} is a ratio of $N\times N$ determinants:
\begin{align}
\label{eq:hLPPdet}
\P\big(\hTau_{2N}\leq u\big) &= 
\frac{\det\big(\hH_u(\alpha_i,\beta_j)\big)_{1\leq i,j\leq N}}{\det(C(\alpha_i,\beta_j))_{1\leq i,j\leq N}} \, ,
\end{align}
for $u>0$, where $C(z,w) := (z+w)^{-1}$ and
\begin{align*}
\hH_u(z,w) &:= \e^{-u(z + w)} \int_0^u (\e^{z x}-\e^{-z x})
\e^{w x} \diff x \, .
\end{align*}
\end{theorem}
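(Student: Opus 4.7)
The plan is to derive~\eqref{eq:hLPPdet} directly from the integral representation~\eqref{eq:hLPPint} by the same Cauchy--Binet argument that underlies Theorem~\ref{thm:fLPPdet}, the only structural difference being that one symplectic Schur factor is replaced by a classical Schur factor.

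First, I would substitute the determinantal expressions~\eqref{eq:contS} for $\schur^{\rm cont}_{\bm\beta}(\bm x)$ and~\eqref{eq:contSp} for $\sp^{\rm cont}_{\bm\alpha}(\bm x)$ into~\eqref{eq:hLPPint}, pulling the parameter-dependent denominators $\prod_{i<j}(\alpha_i-\alpha_j)\prod_{i\leq j}(\alpha_i+\alpha_j)$ and $\prod_{i<j}(\beta_i-\beta_j)$ outside the integral. The integrand then reduces to the product of the two $N\times N$ determinants $\det(\e^{\beta_j x_i})_{i,j}$ and $\det(\e^{\alpha_j x_i}-\e^{-\alpha_j x_i})_{i,j}$.

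Second, I would apply the Cauchy--Binet identity. Each of these two determinants is antisymmetric under permutations of $x_1,\dots,x_N$, so their product is symmetric and the integral over the Weyl chamber $\{0\leq x_N\leq\cdots\leq x_1\leq u\}$ equals $1/N!$ times the integral over $[0,u]^N$. Cauchy--Binet then collapses the multiple integral to
\[
\det\!\left(\int_0^u \e^{\beta_j x}\,(\e^{\alpha_i x}-\e^{-\alpha_i x})\,\diff x\right)_{1\leq i,j\leq N}.
\]
Third, I would factor $\e^{u(\alpha_i+\beta_j)}$ out of the $(i,j)$-entry by multilinearity in rows and columns. This produces a scalar $\e^{u\sum_k(\alpha_k+\beta_k)}$ that cancels the exponential prefactor in~\eqref{eq:hLPPint}, while what remains in the determinant is precisely $\hH_u(\alpha_i,\beta_j)$ up to a transposition that does not affect the determinant.

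Finally, to pin down the unspecified proportionality constant, I would use a normalization argument. As $u\to\infty$, one has $\hH_u(\alpha_i,\beta_j)\to(\alpha_i+\beta_j)^{-1}=C(\alpha_i,\beta_j)$ for positive $\alpha_i,\beta_j$, and $\P(\hTau_{2N}\leq u)\to 1$. Combined with the explicit Cauchy identity~\eqref{eq:CauchyDet}, this forces the proportionality constant together with the parameter-dependent denominators pulled out in step one to combine into $1/\det(C(\alpha_i,\beta_j))_{i,j}$, yielding~\eqref{eq:hLPPdet}. The main obstacle is purely bookkeeping: one must track the various parameter-dependent products, noting in particular that the denominator in~\eqref{eq:contSp} carries an extra factor $\prod_{i\leq j}(\alpha_i+\alpha_j)$ absent in the classical case~\eqref{eq:contS}, and then verify that everything fits against the Cauchy denominator. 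Conceptually the mixed symplectic--classical combination simply replaces one antisymmetric factor $\e^{\alpha_j x_i}-\e^{-\alpha_j x_i}$ by $\e^{\beta_j x_i}$, and this shows up in $\hH_u$ as the presence of $\e^{wx}$ where $\fH_u$ had $\e^{wx}-\e^{-wx}$.
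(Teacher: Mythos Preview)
Your proposal is correct and follows exactly the route the paper indicates: the paper does not give a detailed proof of this theorem but cites it from~\cite{BZ19a}, noting only that it ``is a result of~\eqref{eq:contS}, \eqref{eq:contSp} and~\eqref{eq:hLPPint}, combined with the Cauchy--Binet identity.'' You have filled in precisely those steps---substituting the determinantal forms of the continuous Schur functions, symmetrizing over the Weyl chamber, applying Andr\'eief/Cauchy--Binet, and fixing the constant via the $u\to\infty$ limit---so there is nothing to add.
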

 We now write the distribution function as a Fredholm determinant.
\begin{theorem}
\label{thm:hFredholm}
The distribution of $\hTau_{2N}$ with exponential waiting times as in~\eqref{eq:expDistribution} is given by
\begin{equation}
\label{eq:hFredholm}
\P\big(\hTau_{2N}\leq u\big) = \det(I - \hK_{N,u})_{L^2(\R_+)} \, ,
\end{equation}
where $\hK_{N,u} \colon L^2(\R_+)\to L^2(\R_+)$ is the operator defined through the kernel
\begin{equation}
\label{eq:hKernel}
\hK_{N,u}(\lambda,\xi)
= \frac{1}{(2\pi\i)^2}
\int_{\Gamma_1} \diff z
\int_{\Gamma_2} \diff w \,
\e^{-\lambda z - \xi w} \,
\hHbar_u(z,w)
\prod_{m=1}^N \frac{(z+\beta_m)(w+\alpha_m)}{(z-\alpha_m)(w-\beta_m)} \, .
\end{equation}
Here, $\Gamma_1,\Gamma_2 \subset \C_+$ are any positively oriented simple closed contours such that $\Gamma_1$ encloses $\alpha_1,\dots,\alpha_N$, $\Gamma_2$ encloses $\beta_1,\dots,\beta_N$ as well as the whole $\Gamma_1$, and
\begin{equation}
\label{eq:hHbar}
\hHbar_u(z,w)
= \frac{\e^{-u(z+w)}}{z+w}
+ \frac{\e^{-u(z+w)}}{z-w}
+ \frac{\e^{-2uz}}{w-z} \, .
\end{equation}
\end{theorem}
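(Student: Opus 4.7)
The plan is to mirror the proof of Theorem~\ref{thm:fFredholm}, applying the general scheme of Theorem~\ref{thm:detToFredDet} to the ratio of determinants provided by Theorem~\ref{thm:hLPPdet}. Concretely, I identify $H := \hH_u$ in the notation of Theorem~\ref{thm:detToFredDet} and set
\[
\hHbar_u(z,w) := C(z,w) - \hH_u(z,w) = \frac{1}{z+w} - \e^{-u(z+w)} \int_0^u (\e^{zx} - \e^{-zx}) \e^{wx} \diff x.
\]
Splitting the integral into its two exponential pieces and integrating (which is valid whenever $z+w \neq 0$ and $z-w \neq 0$) should give, after straightforward algebra,
\[
\hHbar_u(z,w) = \frac{\e^{-u(z+w)}}{z+w} + \frac{\e^{-u(z+w)}}{z-w} + \frac{\e^{-2uz}}{w-z},
\]
which is formula~\eqref{eq:hHbar}. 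I would carry out this short computation explicitly.

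Next I need to verify the hypothesis of Theorem~\ref{thm:detToFredDet}, namely that $\hHbar_u$ is holomorphic on $\C_+ \times \C_+$. The factor $1/(z+w)$ is harmless on $\C_+ \times \C_+$ because $\Re(z+w) > 0$ there. The only genuine concern is the apparent pole along the diagonal $z = w$, coming from the last two summands; but these combine as
\[
\frac{\e^{-u(z+w)}}{z-w} + \frac{\e^{-2uz}}{w-z} = \frac{\e^{-u(z+w)} - \e^{-2uz}}{z-w},
\]
whose numerator vanishes at $z = w$, so the singularity is removable. Hence $\hHbar_u$ extends to a holomorphic function on $\C_+ \times \C_+$, as required.

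With these checks in place, Theorem~\ref{thm:detToFredDet} applied to~\eqref{eq:hLPPdet} directly produces~\eqref{eq:hFredholm} with the kernel~\eqref{eq:hKernel}, provided the contours $\Gamma_1, \Gamma_2 \subset \C_+$ are chosen positively oriented, enclosing $\{\alpha_m\}$ and $\{\beta_m\}$ respectively. The convention that $\Gamma_2$ encloses $\Gamma_1$ is imposed so that $z \neq w$ along the contours, which legitimizes the three-term expression~\eqref{eq:hHbar} used inside the integrand (the two possible choices yield equivalent answers by symmetry in view of the removable singularity). There is no real obstacle here beyond the bookkeeping of the integral computation and the removable-singularity check; the argument is a verbatim adaptation of the proof of Theorem~\ref{thm:fFredholm}, with $\hH_u$ in place of $\fH_u$.
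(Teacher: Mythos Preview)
Your proposal is correct and follows essentially the same approach as the paper: apply Theorem~\ref{thm:detToFredDet} to the ratio~\eqref{eq:hLPPdet}, define $\hHbar_u := C - \hH_u$, and compute the integral to obtain~\eqref{eq:hHbar} under the convention that $\Gamma_2$ encloses $\Gamma_1$ so that $z\neq w$. Your explicit check that the apparent singularity at $z=w$ is removable (hence $\hHbar_u$ is holomorphic on $\C_+\times\C_+$) is a detail the paper leaves implicit but is indeed needed for the hypothesis of Theorem~\ref{thm:detToFredDet}.
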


\begin{proof}
The claim follows from formula~\eqref{eq:hLPPdet} and Theorem~\ref{thm:detToFredDet}. Function $\hHbar_u$ in~\eqref{eq:hKernel} is defined through the relation $\hH_u = C - \hHbar_u$ (using the notation of Theorem~\ref{thm:hLPPdet} for $\hH_u$),  i.e.
\[
\hHbar_u(z,w)
= \frac{1}{z+w} - \e^{-u(z + w)} \int_0^u (\e^{z x}-\e^{-z x})
\e^{w x} \diff x \, .
\]
If we assume that $\Gamma_2$ encloses $\Gamma_1$ (so that $z\neq w$ for all $z,w$), integrating the above expression yields~\eqref{eq:hHbar}.
\end{proof}

\begin{proof}[{\bf Proof of Theorem~\ref{thm:hAsympt}}]
In order to perform the asymptotics of formula~\eqref{eq:hFredholm} in the i.i.d.\ case, we set $\alpha_m=\beta_m=\gamma$ for all $m$ in parametrization~\eqref{eq:expDistribution}. Our kernel~\eqref{eq:hKernel} thus becomes
\[
\begin{split}
\hK_{N,u}
= \hKone_{N,u} + \hKtwo_{N,u} + \hKthree_{N,u} \, ,
\end{split}
\]
where
\begin{align*}
\hKone_{N,u}(\lambda,\xi)
&= \frac{1}{(2\pi\i)^2}
\int_{\Gamma_1} \diff z
\int_{\Gamma_2} \diff w \,
\e^{-\lambda z - \xi w}
\frac{\e^{-u(z+w)}}{z+w}
\bigg[ \frac{(z+\gamma)(w+\gamma)}{(z-\gamma)(w-\gamma)} \bigg]^N
\, , \\
\hKtwo_{N,u}(\lambda,\xi)
&= \frac{1}{(2\pi\i)^2}
\int_{\Gamma_1} \diff z
\int_{\Gamma_2} \diff w \,
\e^{-\lambda z - \xi w}
\frac{\e^{-u(z+w)}}{z-w}
\bigg[ \frac{(z+\gamma)(w+\gamma)}{(z-\gamma)(w-\gamma)} \bigg]^N \, , \\
\hKthree_{N,u}(\lambda,\xi)
&= \frac{1}{(2\pi\i)^2}
\int_{\Gamma_1} \diff z
\int_{\Gamma_2} \diff w \,
\e^{-\lambda z - \xi w}
\frac{\e^{-2uz}}{w-z}
\bigg[ \frac{(z+\gamma)(w+\gamma)}{(z-\gamma)(w-\gamma)} \bigg]^N \, .
\end{align*}

For the steepest descent analysis of the first two terms, we are going to adapt the proof of Proposition~\ref{prop:steepestDescent}, taking into account that we now have double contour integrals instead of single ones.
Noticing that $\hKone_{N,u}$ and $\hKtwo_{N,u}$ only differ for the sign in $(z\pm w)^{-1}$, we study both at the same time, denoting by $K^{\pm}_{N,u}$ either of them:
\[
K^{\pm}_{N,u}(\lambda,\xi)
:= \frac{1}{(2\pi\i)^2}
\int_{\Gamma_1} \diff z
\int_{\Gamma_2} \diff w \,
\e^{-\lambda z - \xi w}
\frac{\e^{-u(z+w)}}{z \pm w}
\bigg[ \frac{(z+\gamma)(w+\gamma)}{(z-\gamma)(w-\gamma)} \bigg]^N
\, .
\]
We replace the contour $\Gamma_1$ with $T_{R_1} + 2\epsilon\gamma/\sqrt[3]{2N}$ and the contour $\Gamma_2$ with $T_{R_2} + \epsilon\gamma/\sqrt[3]{2N}$, for some $\gamma < R_1 < R_2$ and $\epsilon>0$; Here, as in the proof of Proposition~\ref{prop:steepestDescent}, $T_R$ is the negatively oriented triangular path with vertices $0$, $2R\e^{\i\pi/3}$ and $2R\e^{-\i\pi/3}$.
Notice that changing the orientation of both paths does not yield any change of sign in the double contour integral; moreover, the first contour is still enclosed by the second one, and the singularities at $(z\pm w)^{-1}$ are not crossed by the deformed contours (the infinitesimal shifts of $T_{R_1}$ and $T_{R_2}$ are also done for this sake). 
Set now $u=u_N:= 2N/\gamma + r N^{1/3}$ and denote by $\tilde{\Psi}$ the rescaling of any function $\Psi(\lambda,\xi)$ by the factor $\sqrt[3]{2N}/\gamma$, as in~\eqref{eq:rescaling}.
We can thus write
\[
\tilde{K}^{\pm}_{N,u_N}(\lambda, \xi)
= \frac{\sqrt[3]{2N}}{\gamma (2\pi\i)^2}
\int_{T_{R_1} + \frac{2\epsilon\gamma}{\sqrt[3]{2N}}} \diff z
\int_{T_{R_2} + \frac{\epsilon\gamma}{\sqrt[3]{2N}}} \diff w
\frac{ \e^{N F(z) - \lambda_r z \sqrt[3]{2N}/\gamma }
 \e^{ N F(w) - \xi_r w \sqrt[3]{2N}/\gamma} }{z \pm w} \, ,
\]
where $\lambda_r := \lambda + 2^{-1/3}\gamma r$, $\xi_r := \xi + 2^{-1/3}\gamma r$, and $F(z):= \log(\gamma+z) - \log(\gamma - z) - 2z/\gamma$. Since the main contribution in the integral is expected to come from $z=w=0$, which is the critical point of $F$, we split the above integral into the following sum:
\[
\tilde{K}^{\pm}_{N,u_N}
= \tilde{K}^{\pm, \rm{in, in}}_{N,u_N}
+ \tilde{K}^{\pm, \rm{in, ex}}_{N,u_N}
+ \tilde{K}^{\pm, \rm{ex, in}}_{N,u_N}
+ \tilde{K}^{\pm, \rm{ex, ex}}_{N,u_N} \, .
\]
Here, the first superscript ``$\rm{in}$'' (``$\rm{ex}$'') indicates that the integration w.r.t.\ $z$ is performed only in the interior (exterior, respectively) of the ball $\{ |z - 2\epsilon\gamma/\sqrt[3]{2N}| \leq \gamma N^{-\alpha}\}$ for some exponent $\alpha>0$ to be specified later on, while the second superscript ``$\rm{in}$'' (``$\rm{ex}$'') indicates that the integration w.r.t.\ $w$ is performed only in the interior (exterior, respectively) of the ball $\{ |w - \epsilon\gamma/\sqrt[3]{2N}| \leq \gamma N^{-\alpha}\}$. In~$\tilde{K}^{\pm, \rm{in, in}}_{N,u_N}$, after the changes of variables $\tilde{z} := z \sqrt[3]{2N}/\gamma$ and $\tilde{w} := w \sqrt[3]{2N}/\gamma$, we obtain
\[
\begin{split}
\tilde{K}^{\pm, \rm{in, in}}_{N,u_N}(\lambda,\xi)
= &\frac{1}{(2\pi\i)^2}
\int_{C + 2\epsilon} \diff \tilde{z}
\exp\bigg\{ \frac{\tilde{z}^3}{3} - \lambda_r \tilde{z} +  R\bigg( \frac{\gamma}{\sqrt[3]{2N}} \tilde{z}\bigg) N  \bigg\}
\1_{\big\{ \abs{ \tilde{z} - 2\epsilon} \leq \sqrt[3]{2} N^{1/3-\alpha} \big\}} \\
&\times \int_{C + \epsilon} \diff \tilde{w}
\exp\bigg\{ \frac{\tilde{w}^3}{3} - \xi_r \tilde{w} +  R\bigg( \frac{\gamma}{\sqrt[3]{2N}} \tilde{w}\bigg) N \bigg\}
\1_{\{\abs{\tilde{w} - \epsilon} \leq \sqrt[3]{2} N^{1/3-\alpha} \}}
\frac{1}{\tilde{z} \pm \tilde{w}} \, ,
\end{split}
\]
where $C$ is the piecewise linear path going from $\e^{-\i\pi/3}\infty$ to the origin to $\e^{\i\pi/3}\infty$, and $R$ is the remainder defined by~\eqref{eq:expansionCriticalPoint}. The indicator functions clearly converge to $1$ if $\alpha < 1/3$. As in the proof of Proposition~\ref{prop:steepestDescent}, one can also show that the remainders, even when multiplied by $N$, vanish uniformly for $\tilde{z},\tilde{w}$ in the support of the integrand, if we choose $1/4< \alpha < 1/3$. Applying dominated convergence, one can see that
\[
\lim_{N\to\infty} \tilde{K}^{\pm, \rm{in, in}}_{N,u_N}(\lambda,\xi)
= \frac{1}{(2\pi\i)^2}
\int_{C + 2\epsilon} \diff \tilde{z}
\int_{C + \epsilon} \diff \tilde{w}
\frac{1}{\tilde{z} \pm \tilde{w}}
\exp\bigg\{ \frac{\tilde{z}^3}{3} - \lambda_r \tilde{z}
+ \frac{\tilde{w}^3}{3} - \xi_r \tilde{w} \bigg\} \, .
\]
Using similar arguments as in the proof of Proposition~\ref{prop:steepestDescent}, together with the bound $\abs{z \pm w}^{-1} \leq 2\sqrt[3]{2N}/(\sqrt{3}\epsilon\gamma)$, one can see that the other terms $\tilde{K}^{\pm, \rm{in, ex}}_{N,u_N}$, $\tilde{K}^{\pm, \rm{ex, in}}_{N,u_N}$, and $\tilde{K}^{\pm, \rm{ex, ex}}_{N,u_N}$ vanish exponentially fast in the limit $N\to\infty$. We thus have:
\begin{align*}
\lim_{N\to\infty} \hKonetilde_{N,u_N}(\lambda,\xi)
&= \frac{1}{(2\pi\i)^2}
\int_{C + 2\epsilon} \diff \tilde{z}
\int_{C + \epsilon} \diff \tilde{w}
\frac{1}{\tilde{z} + \tilde{w}}
\e^{ \tilde{z}^3/3 - \lambda_r \tilde{z}}
\e^{\tilde{w}^3/3 - \xi_r \tilde{w}} \, , \\
\lim_{N\to\infty} \hKtwotilde_{N,u_N}(\lambda,\xi)
&= \frac{1}{(2\pi\i)^2}
\int_{C + 2\epsilon} \diff \tilde{z}
\int_{C + \epsilon} \diff \tilde{w}
\frac{1}{\tilde{z} - \tilde{w}}
\e^{ \tilde{z}^3/3 - \lambda_r \tilde{z}}
\e^{\tilde{w}^3/3 - \xi_r \tilde{w}} \, .
\end{align*}
We will now rewrite these expressions as integrals of Airy functions. In the first one, since $\Re(\tilde{z}+\tilde{w}) >0$ for all $\tilde{z}$ and $\tilde{w}$, we can make the substitution $(\tilde{z}+\tilde{w})^{-1} = \int_0^{\infty} \e^{-(\tilde{z}+\tilde{w})x} \diff x$. The resulting $\diff \tilde{z} \diff \tilde{w} \diff x$  integral is absolutely convergent, hence Fubini's Theorem can be applied to obtain:
\[
\begin{split}
\lim_{N\to\infty} \hKonetilde_{N,u_N}(\lambda,\xi)
&= \int_0^{\infty}
\bigg[ \frac{1}{2\pi\i}
\int_{C + 2\epsilon}
\e^{ \tilde{z}^3/3 - (\lambda_r +x) \tilde{z}} \diff \tilde{z} \bigg]
\bigg[ \frac{1}{2\pi\i}
\int_{C + \epsilon}
\e^{\tilde{w}^3/3 - (\xi_r +x) \tilde{w}} \diff \tilde{w} \bigg] \diff x \\
&= \int_0^{\infty} \Ai(\lambda_r + x) \Ai(\xi_r + x) \diff x \, ,
\end{split}
\]
according to definition~\ref{eq:Airy} of Airy function. In $\hKtwotilde_{N,u_N}$, we deform the contour $C + \epsilon$ into the straight line $l_{\epsilon}$ going from $\epsilon-\i\infty$ to $\epsilon+\i\infty$; since now $\Re(\tilde{z}-\tilde{w})\geq \epsilon >0$ for all $\tilde{z}$ and $\tilde{w}$, we can make the substitution $(\tilde{z}-\tilde{w})^{-1} = \int_0^{\infty} \e^{-(\tilde{z}-\tilde{w})x} \diff x$. The resulting $\diff \tilde{z} \diff \tilde{w} \diff x$  integral is absolutely convergent, hence Fubini's Theorem can be applied to obtain:
\[
\begin{split}
\lim_{N\to\infty} \hKtwotilde_{N,u_N}(\lambda,\xi)
&= \int_0^{\infty}
\bigg[ \frac{1}{2\pi\i}
\int_{C + 2\epsilon}
\e^{ \tilde{z}^3/3 - (\lambda_r +x) \tilde{z}} \diff \tilde{z} \bigg]
\bigg[ \frac{1}{2\pi\i}
\int_{l_{\epsilon}}
\e^{\tilde{w}^3/3 - (\xi_r - x) \tilde{w}} \diff \tilde{w} \bigg] \diff x \\
&= \int_0^{\infty} \Ai(\lambda_r + x) \Ai(\xi_r - x) \diff x \, .
\end{split}
\]
We remark that the second square bracket above is an Airy function as well, since the path $l_{\epsilon}$ can be deformed back to a contour, like $C+\epsilon$, whose arguments at $\infty$ are $\pm\pi/3$.

We finally notice that $\hKthree_{N,u_N}(\lambda,\xi)$ equals exactly the term $\fKtwo_{N,u_N}(\lambda,\xi)$ defined in the proof of Theorem~\ref{thm:fAsympt}. Therefore, as we have already proved there, the rescaled version $\hKthreetilde_{N,u_N}(\lambda,\xi)$ vanishes as $N\to\infty$.

We conclude that, as a whole, our rescaled kernel has the following limit:
\[
\lim_{N\to\infty} \hKtilde_{N,u_N}(\lambda,\xi)
= \mathcal{K}_{2\to 1}(\lambda_r, \xi_r )
= \mathcal{K}_{2\to 1}(\lambda + 2^{-1/3}\gamma r, \xi + 2^{-1/3}\gamma r ) \, ,
\]
where $\mathcal{K}_{2\to 1}$ is defined in~\eqref{eq:Airy21kernel}.  Using the key fact that all contours are chosen to have positive distance from the imaginary axis (as in the analogous estimates obtained in Corollary~\ref{coro:steepestDescentEstimate} and in the proof of Theorem~\ref{thm:fAsympt}), one can show that there exist two positive constants $c_1$ and $c_2$ such that, for all $\lambda,\xi\in\R_+$,
\[
\sup_{N\in\N} \big| \hKtilde_{N,u_N}(\lambda,\xi) \big|
\leq c_1 \e^{-c_2 \lambda} \, .
\]
The latter bound provides, as in the third step of the proof of Theorem~\ref{thm:fAsympt}, the right estimates for the series expansion of $\det(I - \hKtilde_{N,u_N})_{L^2(\R_+)}$ in order to justify its convergence. 
The claim thus follows from the Fredholm determinant representation~\eqref{eq:Airy21} of $F_{2\to 1}$.
\end{proof}

{\bf Acknowledgements}: 
This article is dedicated to Raghu Varadhan on the occasion of his 75th birthday. Exact solvability is arguably not
within Raghu's signature style. However, the second author learned about random polymer models and the $t^{1/3}$ law from Raghu, as 
being a challenging problem, while he was working under his direction on a somewhat different PhD topic. Since that time 
he has always had the desire to make some contributions in this direction and would like to thank him for, among other things,
having provided this stimulus.

The work of EB was supported by EPSRC via grant EP/M506679/1.
The work of NZ was supported by EPSRC via grant EP/L012154/1.

\vskip 4mm

\end{document}